\newtheorem{thm}{Theorem}[section]
\newtheorem{cor}[thm]{Corollary}
\newtheorem{lem}[thm]{Lemma}
\newtheorem{prop}[thm]{Proposition}
\newtheorem{defn}[thm]{Definition} 
\newtheorem{notation}[thm]{Notation}
\newcommand{\HH}{\mathcal{H}}
\newcommand{\Z}{\mathbb Z}
\newcommand{\PG}{{\rm PG}}
\newcommand{\Gauss}[2]{{\begin{bmatrix} #1 \\ #2 \end{bmatrix}_2}}
\newcommand{\qGauss}[2]{{\begin{bmatrix} #1 \\ #2 \end{bmatrix}_q}}
\newcommand{\F}{\mathbb F}
\newcommand{\x}{\overline{x}}
\newcommand{\y}{\overline{y}}
\author{Philipp Heering\footnote{Justus-Liebig Universität Gießen,
Mathematisches Institut, Arndtstraße 2,
 D-35392 Gießen, Germany,
 philipp.heering@math.uni-giessen.de}, \ 
Vladislav Taranchuk\footnote{Ghent University,
Department of Mathematics: Analysis, Logic and Discrete Mathematics, Krijgslaan 281,
B-9000 Gent, Belgium,
vlad.taranchuk@ugent.be
}}
\title{Line-parallelisms of $\text{PG}(n, 2)$ from Preparata-like codes}
\date{August 2025}
\begin{document}

\maketitle

\begin{center}
\section*{Abstract}
\end{center}
Partitions of the binary linear Hamming code into Preparata-like codes are known to induce line-parallelisms of PG$(n, 2)$. In this paper, we show that if $P$ is any Preparata-like code contained in the binary linear Hamming code $H$ of the same length, then $H$ can be partitioned into additive translates of $P$. This generalizes a result of Baker, van Lint, and Wilson who prove this fact for the class of generalized Preparata codes. We give an explicit description for line-parallelisms obtained from such a partition via crooked Preparata-like codes and establish an equivalence criterion for such line-parallelisms.

\noindent
 \textbf{Keywords:} Preparata-like codes, Crooked functions, Line-parallelisms, Equivalence \\
 \textbf{MSC (2020):} 
 51E23, 
11T06, 
94A60 

\section{Introduction}

Let $\F_q^{n+1}$ be the $(n + 1)$-dimensional vector space over the finite field with $q$ elements, $\F_q$.  For $n\geq 3$ we denote by PG$(n,q)$ the $n$-dimensional projective space, i.e. an incidence geometry whose points are the 1-dimensional subspaces of $\F_{q}^{n+1}$ and whose lines are the 2-dimensional subspaces of $\F_q^{n+1}$ with incidence being defined via inclusion. A \textit{line-spread} of PG$(n, q)$ is a partitioning of the points of PG$(n, q)$ into lines. A \textit{line-parallelism} of PG$(n, q)$ is a partitioning of the lines of PG$(n, q)$ into line-spreads.
It is known that line-spreads exist in PG$(n, q)$ if and only if $n$ is odd \cite{Hirschfeld}. On the other hand, it is a long standing open question to determine whether line-parallelisms exist in PG$(n, q)$ for any prime power $q$ and odd positive integer $n$. In this paper, we will only consider line-spreads and line-parallelisms, hence we will call them spreads and parallelisms.

Below, we give a brief survey of known parallelisms. We separate the cases $q \neq 2$ and $q = 2$. One reason for this distinction is that the results in this paper yield new constructions of parallelisms of $\PG(n, 2)$. However, a more important reason is that historically, constructions of parallelisms of $\PG(n, q)$ differ greatly between the cases $q = 2$ and $q \neq 2$. Almost all known parallelism of $\PG(n, 2)$ fall out as a corollary of the fact that the linear Hamming code ca be partitioned into translates of a Preparata-like code of the same length. 
The existence of such a partition using the classical Preparata code was first demonstrated by Zaitsev, Zinoviev, and Semakov \cite{parallelisms_1973} in 1973, and independently by Baker \cite{baker} in 1976. This was further extended by Baker, van  Lint, and Wilson \cite{BVW} in 1983 who gave a construction of the generalized Preparata codes\footnote{In this paper, a \textit{generalized Preparata code} always refers to the codes described in \cite{BVW}.} and showed that these too can be used to partition the linear Hamming code. Our review of the literature on parallelisms of $\PG(n, 2)$ and on partitions of the linear Hamming code has indicated that recent results on partitions of the Hamming code have not been incorporated into the literature on parallelisms of $\PG(n, 2)$.
The results of V. A. Zinoviev and D. V. Zinoviev \cite{zinoviev2016generalized} from 2016, for example, imply the existence of a new class of parallelisms that have not yet been considered.
We also came across some old results that seem to have been forgotten. Therefore, aside from highlighting our own contribution to the construction of new parallelisms of $\PG(n, 2)$, we make it a priority to survey all relevant results and give the reader a complete overview of the current state of the relevant research. 

\subsection{Parallelisms of $\PG(n, q)$, $q \neq 2$}

Since the 1970's, research on the existence of parallelisms of PG$(n, q)$ has been an active area. In 1973, Denniston \cite{Denniston1973} constructed the first family of parallelisms for each $q > 2$ in PG$(3, q)$ using the Klein quadric. Around the same time, Denniston \cite{Denniston19732} also gave an example of a cyclic regular parallelism of $\PG(3, 8)$. In 1974, Beutelspacher \cite{beutelspacher_old} gave a construction of parallelisms of PG$(n, q)$ when $q > 2$ and $n = 2^k - 1$ for any $k > 1$. The main idea in Beutelspacher's construction was the embedding of PG$(n, q)$ into PG$(n, q^2)$ and proceeding to construct parallelisms via induction.

In 1998, Prince \cite{Prince} enumerated (up to equivalence)  all cyclic parallelisms of $\PG(3, 5)$, there are a total of 45 of them, 2 of which are also regular parallelisms. Later in the same year, Penttila and Williams \cite{penttila1998regular} constructed cyclic regular parallelisms of $\PG(3, q)$ for each $q \equiv 2 \pmod{3}$. This was achieved by a special partitioning of the points of the Klein quadric and then using the Klein correspondence to obtain a parallelism. Several methods have been designed to obtain new parallelisms of $\PG(3, q)$ from old ones, see \cite{CosetSwitching, JohnsonPar, Pavese_parallelism_PG3q}. 

No new cases for the existence of parallelisms were resolved until 2023, when Feng and Xu \cite{Xu2023} showed that parallelisms exist in PG$(n, q)$ for $q = 3, 4, 8, 16$ and for all odd $n$. Their approach depended on proving that the existence of a certain type of parallelism of $\PG(n, q)$ can be reduced to finding a special set of spreads in $\PG(3, q)$. In \cite{Xu2023}, such a set was found for each value of $q$ listed above.

For a more detailed history of parallelisms of PG$(n, q)$, see the survey by Johnson \cite{Johnson_survey}. It's worth noting that since this survey was published in 2003, there has been a flurry of research on parallelisms of PG$(3, q)$ \cite{Pavese_parallelism_PG3q, parallelisms_of_PG35, parallelisms_of_PG34_with_regular}. In particular, we point out that recently, Pavese and Santonastaso \cite{Pavese_parallelism_PG3q} proved for $q > 2$ that $\PG(3, q)$ admits at least $\Theta(q^{q-1}q!)$ inequivalent parallelisms when $q$ is even and $\Theta(q^{2q - 3})$ inequivalent parallelisms of when $q$ is odd.

\subsection{On Preparata-like and Hamming-like codes}

A binary linear code $C$ with parameters $[m, k, d]$ is a $k$-dimensional subspace of $\F_{2}^m$ such that the Hamming distance between any two codewords in $C$ is at least $d$. The binary linear Hamming code $H_n$ is a linear code with parameters $[2^n - 1, 2^n - n - 1, 3]$. Its parity-check matrix is the $n \times (2^n - 1)$ matrix whose columns are formed by the distinct non-zero vectors of $\F_{2}^n$. A (not necessarily linear) code $C$ is called \textit{Hamming-like} if it has the same parameters as $H_n$ for some $n$. A \textit{Preparata-like} code $P_n$ is a binary code with length $2^{n} - 1$, size $2^{2^n - 2n}$ and minimum distance 5. Preparata-like codes can only exist when $n$ is even. The first infinite family of such codes was given by Preparata \cite{Preparata} in 1968. The description of these codes was subsequently simplified and generalized in 1983 \cite{BVW}. Today, there are many known examples of Preparata-like codes \cite{Kantor, crooked_2000}.
An extended Preparata-like code $EP_n$ can be obtained from a Preparata-like code $P_n$ by adding a parity-check bit, thereby increasing the length and minimum distance of the code both by 1. In a similar manner, one can obtain an extended Hamming-like code $EH_n$ from a corresponding Hamming-like code $H_n$. We remark that results in the literature are given sometimes for the extended codes and sometimes for non-extended versions. There is a natural relationship between the two, so unless stated otherwise, we give all results in terms of their implication for the non-extended version of Preparata-like and Hamming-like codes.

In \cite{parallelisms_1973}, it was shown that any Preparata-like code $P_n$ is contained in a Hamming-like code $H_n$ of the same length $m$. In particular, it was shown that $H_n = P_n \cup E$, where $E$ is the set of all codewords in $\F_2^{m}$ whose distance to $P_n$ is at least 3. 

It is known that if an extended Preparata-like code is $\Z_4$-linear (via the gray map, see \cite{Hammons}), then the corresponding extended Hamming-like code containing it is also $\Z_4$-linear as was shown by Borges, Phelps, Rif\'{a}, and Zinoviev \cite{Borges}. The linear extended Hamming code $EH_{n}$ is not $\Z_4$-linear except when $n \leq 4$ \cite{Hammons}. In fact, for all known Preparata-like codes $P_n$, exactly one of the following holds:
\begin{enumerate}
    \item $P_n$ is contained in the linear Hamming code \cite{BVW, crooked_2000}, or
    \item $EP_n$ is $\Z_4$-linear and is contained in an extended Hamming-like code which is $\Z_4$-linear \cite{Borges, Hammons, Kantor}.
\end{enumerate}

\noindent
Since the seminal results of \cite{baker, BVW, parallelisms_1973}, Preparata-like codes and their relation to the Hamming-like codes containing them, have been studied extensively. As we noted, the class of generalized Preparata codes are known to give rise to a partition of the linear Hamming code \cite{baker, BVW, parallelisms_1973}. It is also known that any extended $\Z_4$-linear Preparata-like code $EP_n$  partitions the extended $\Z_4$-linear Hamming-like code containing it by additive translates of $EP_n$ over $\Z_4$ \cite{Borges}. Furthermore, it was recently shown that it is possible to introduce a group operation on the generalized Preparata codes and the linear Hamming code so that both would be interpreted as group codes under this operation \cite{zinoviev2016generalized}. Consequently, a new and inequivalent partition of the Hamming code was given by the cosets of the generalized Preparata code in the linear Hamming code via this group operation.

In 2009, Zinoviev asked if any Preparata-like code gives rise to a partition of the corresponding Hamming-like code containing it (see \cite{heden2009open}). Our first result resolves this question in the affirmative in the case that the Hamming-like code is linear.

\begin{thm}\label{T: Partition}
    Let $P_n$ be a Preparata-like code contained in the linear Hamming code $H_n$ of the same length. Then $H_n$ can be partitioned into additive translates of $P_n$.
\end{thm}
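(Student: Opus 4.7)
My plan is to prove Theorem \ref{T: Partition} by reducing the existence of the partition to finding a specific linear subspace of $H_n$ and then constructing such a subspace. First observe that a partition $H_n = \bigsqcup_{t \in T}(P_n + t)$ is equivalent to finding a subset $T \subseteq H_n$ with $|T| = |H_n|/|P_n| = 2^{n-1}$ satisfying $t_1 + t_2 \notin P_n + P_n$ for all distinct $t_1, t_2 \in T$: pairwise disjointness of the translates $P_n + t$ is exactly this difference condition, and the combined cardinality $|T| \cdot |P_n| = |H_n|$ then forces the disjoint translates to cover $H_n$. Taking $T = V$ to be a linear subspace of $H_n$ of dimension $n - 1$, this reduces to the single condition $V \cap (P_n + P_n) = \{0\}$, which is equivalent to the statement that $P_n$ is a complete set of coset representatives for $V$ in $H_n$; i.e., the quotient map $H_n \to H_n/V$ restricts to a bijection on $P_n$ (the cardinality match $|H_n/V| = 2^{2^n - 2n} = |P_n|$ is automatic).

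Because $P_n$ has minimum distance 5, all non-zero elements of $P_n + P_n$ have Hamming weight at least 5, so a sufficient condition for $V \cap (P_n + P_n) = \{0\}$ is that every non-zero vector of $V$ has weight at most 4. For $n = 4$, this is easily arranged: the $\F_2$-span of the four weight-3 codewords of $H_4$ corresponding to the four lines of a Fano plane in $\F_2^4$ that avoid a fixed point is a 3-dimensional subspace whose non-zero vectors all have weight 3 or 4. For even $n \geq 6$, however, a short support-size count (combining the total-weight identity $\sum_{v \in V \setminus \{0\}} |v| = 2^{n-2}|\mathrm{supp}(V)|$ with the bound $\sum_{v \in V \setminus \{0\}} |v| \leq 4(2^{n-1}-1)$) shows that no $(n-1)$-dimensional subspace of $H_n$ can consist entirely of weight-$\leq 4$ vectors; for such $n$ one must allow higher-weight vectors in $V$ and exploit finer information about the difference set $P_n + P_n$.

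The main obstacle is therefore carrying out the construction of $V$ for general even $n$. This step must exploit specific structural properties of Preparata-like codes inside the linear Hamming code: the distance distribution of $P_n$ (which for distance-invariant Preparata-like codes matches that of the classical Preparata code) together with the Fourier-analytic identity $\widehat{\mathbf{1}_{P_n}}(y) = |P_n|$ for all $y \in H_n^\perp$ (which follows from $P_n \subseteq H_n$ and forces $\widehat{\mathbf{1}_{P_n}}$ to be constant on cosets of $H_n^\perp$ in $\F_2^{2^n-1}$). Combining these with a careful combinatorial choice of $V$ should yield the required subspace.
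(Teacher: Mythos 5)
Your opening reduction is correct and is exactly the right first move: a family $\{P_n+t : t\in T\}$ with $T\subseteq H_n$, $|T|=2^{n-1}$, partitions $H_n$ if and only if $t_1+t_2\notin P_n+P_n$ for all distinct $t_1,t_2\in T$, and since $P_n$ has minimum distance $5$ it suffices that all pairwise sums $t_1+t_2$ have weight at most $4$. The gap is what you do next: you impose, with no justification, that $T$ be a \emph{linear} subspace $V$, and then your own support-counting argument shows this route is dead for $n\ge 6$ (an $(n-1)$-dimensional subspace all of whose nonzero vectors have weight $\le 4$ has support of size less than $8$, hence dimension at most $7$, and even the boundary cases are killed by the Hamming bound). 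Having painted yourself into this corner, the final paragraph does not get you out: appealing to the distance distribution of $P_n$ and to $\widehat{\mathbf{1}_{P_n}}$ being constant on cosets of $H_n^\perp$ gives no concrete construction of $V$, and no argument is offered for why a subspace $V$ with $V\cap(P_n+P_n)=\{0\}$ but containing vectors of weight $\ge 5$ should exist. As written, the proof only works for $n=4$.

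The repair is to drop linearity. Identify the coordinates of $H_n$ with $\F_2^n\setminus\{0\}$ and take $T=\{0\}\cup\{t_\alpha : \alpha\in \operatorname{span}\{e_2,\dots,e_n\}\setminus\{0\}\}$, where $t_\alpha$ is the characteristic vector of the line $\{e_1,\alpha,e_1+\alpha\}$; this is the pencil of the $2^{n-1}-1$ lines of $\PG(n-1,2)$ through the point $e_1$. Each $t_\alpha$ lies in $H_n$ and has weight $3$, and for $\alpha\ne\beta$ the supports of $t_\alpha$ and $t_\beta$ meet exactly in $e_1$, so $t_\alpha+t_\beta$ has weight $4$. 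Thus every pairwise sum of elements of $T$ has weight $3$ or $4$, which is all your reduction requires; note $T$ is very much not a subspace, since $t_\alpha+t_\beta$ has weight $4$ while every $t_\gamma$ has weight $3$. (If $0\notin P_n$, first replace $P_n$ by $P_n+c$ for some $c\in P_n$; the difference condition is translation-invariant, so nothing changes.) This is precisely the argument the paper gives, and it needs none of the distance-distribution or Fourier-analytic machinery you invoke at the end.
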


In the next section, we describe how such partitions induce parallelisms of $\PG(n, 2)$ and give an accounting of all known parallelisms of $\PG(n, 2)$.

\subsection{Parallelisms of $\PG(n, 2)$}\label{q=2}

Let $H_n$ and $EH_n$ be the linear and extended linear Hamming codes respectively. Likewise, let $P_n$ and $EP_n$ be some Preparata-like and extended Preparata-like code. Recall that the parity-check matrix of $H_n$ has as columns precisely the set of all non-zero vectors of $\F_{2}^{n}$. Thus, any codeword $c \in H_n$ can be considered to be the characteristic vector of a subset $S_c \subset \F_{2}^{n} \setminus\{0 \}$ satisfying $\sum_{x \in S_c}x = 0$. Since the minimum weight of $H_n$ is 3, we observe that this implies that the minimum-weight codewords of $H_n$ are precisely all vectors which are characteristic vectors of the sets $\{a, b, a+ b\} \subset \F_{2}^{n}\setminus \{ 0\}$, where $a \neq b$. In other words, these are precisely the lines of $\PG(n - 1,2)$. Observe that likewise, the minimum-codewords of $EH_n$ can be identified as subsets $S \subset \F_{2}^{n}$ of cardinality 4, satisfying $\sum_{x \in S}x = 0$. Consequently, all such vectors are the characteristic vectors of some affine plane in AG$(n, 2)$.

A partition of $H_n$ into additive translates of $P_n$ also partitions the minimum-weight codewords of $H_n$, and therefore the lines of $\PG(n - 1, 2)$. Note that since each translate is a Preparata-like code, it has minimum distance $5$. Thus, any two codewords of weight 3 in the same translate must have disjoint supports. A quick computation shows that $|H| = 2^{n - 1}|P|$, which implies that in each translate the minimum weight codewords correspond to spreads of $\PG(n - 1, 2)$, and therefore, all the translates together constitute a  parallelism of $\PG(n - 1, 2)$. All of this extends naturally to $EH_n$ and $EP_n$, where one obtains that the codewords of weight 4 in each translate of $EP_n$ form a 2-design of affine planes in AG$(n, 2)$. This implies that the Steiner quadruple system arising from the points and planes of AG$(n, 2)$ is resolvable.

The paper \cite{zinoviev2016generalized}, which gives a new partition of the extended linear Hamming code into cosets of the extended generalized Preparata code, also demonstrates that the corresponding partition of the affine planes is inequivalent to those obtained in \cite{BVW}.  Furthermore, \cite[Lemma 9]{zinoviev2016generalized} implies that each part in the partition corresponds to a unique spread in $\PG(2n-1, 2)$ and so this construction actually yields a new class of inequivalent parallelisms of $\PG(2n-1, 2)$, however, to our knowledge, this implication has not been noted anywhere.  

\subsubsection{A list of  all known parallelisms of $\PG(n, 2)$}

Including our result, all known infinite families of parallelism of $\PG(n, 2)$ except for one, arise from partitions of the linear Hamming code into Preparata-like codes. The one exception is a class of parallelisms constructed by Wettl \cite{Wettl} in 1991. It can be quickly determined that the parallelisms in \cite{Wettl} are inequivalent to those given in \cite{baker, BVW}. The parallelisms induced by the partition of linear Hamming code into translates of the generalized Preparata codes \cite{BVW} are actually described by Baker \cite{Baker2} in a separate paper, which seems to have been forgotten. We note that these same exact parallelisms were rediscovered by Johnson and Montinaro \cite[Section 5]{JohnsonMontinaro} in 2012. Below we give a list of all known infinite families of parallelisms of $\PG(n, 2)$:
\begin{enumerate}
    \item The parallelisms arising from the partitioning of the Hamming code into additve translates of some Preparata-like code as shown in \cite{baker, Baker2, BVW, JohnsonMontinaro, parallelisms_1973},which is now more generally implied by Theorem \ref{T: Partition}. 
    \item The parallelisms arising from the partitioning of the linear Hamming code into cosets of the generalized Preparata codes via a group  theoretical approach \cite{zinoviev2016generalized}.
    \item The parallelisms that are explicitly constructed in \cite{Wettl}.
\end{enumerate}

\noindent
There are also a number of sporadic results known, obtained mainly by computer search, namely for $\PG(n, 2)$ where $n = 5, 7, 9$. It is shown in particular that in these cases $\PG(n, 2)$ admits parallelisms with special properties, such as orthogonality and point-cyclic transitivity \cite{Braun, Hishida, Sarmiento, Stinson}.

We remark that Theorem \ref{T: Partition} does yield a larger class of parallelisms due to the fact that the more general class of Preparata-like codes described by van Dam and Fon-der-Flaass \cite{crooked_2000} are known to be contained in linear Hamming codes. These Preparata-like codes are linked to the existence of special functions over $\F_{2^n}$ called \textit{crooked functions}. The generalized Preparata codes are known to be related to the crooked function $f(x) = x^{2^t + 1}$ over $\F_{2^n}$ where $n$ is odd and $\gcd(n, t) = 1$. In the next section we survey the known results on crooked functions and describe the corresponding Preparata-like codes. In particular, we highlight that several new infinite families of crooked functions have been discovered in recent years
which consequently will give rise to parallelisms of PG$(n, 2)$ via our construction that are inequivalent to those in \cite{baker, Baker2, BVW,  JohnsonMontinaro}.

\subsection{On crooked functions}

Crooked functions were introduced by Bending and Fon-der-Flaass \cite{Bending1998} in order to generalize the family of distance-regular graphs constructed by De Caen, Mathon, and Moorhouse \cite{Distance}.

\begin{defn}[\cite{Bending1998}] \label{crookedfunction} 
    Let $f$ be a function on $\F_{2^n}$. 
    The function $f$ is called crooked if it satisfies the following conditions:
    \begin{enumerate}
        \item $f(0) = 0$.
        \item $f(x) + f(y) + f(z) + f(x + y + z) \neq 0$ for any three distinct $x, y, z \in \mathbb{F}_{2^n}$.
        \item $f(x) + f(y) + f(z) + f(x + a) + f(y + a) + f(z + a) \neq 0$ for any $a \neq 0$ and any $x, y, z\in \mathbb{F}_{2^n}$.
    \end{enumerate}
\end{defn}

\noindent \textbf{Remark}: The definition we give above is the original definition of crooked functions. We note that in more recent literature, a broader class of functions is sometimes referred to as crooked \cite{bierbrauer2008crooked, kyureghyan2007crooked}. In this paper, any function called crooked strictly obeys Definition \ref{crookedfunction}.

\begin{defn}[\cite{Bending1998}]\label{D: equivalence crooked}
    Let $f$ and $f'$ be two crooked functions over $\mathbb{F}_{2^n}$.
        \begin{enumerate}
            \item  We say that $f'$ is affine equivalent to $f$ if there exists an $\alpha \in \mathbb{F}_{2^n}$ and linear permutations $L_1, L_2$ of $\mathbb{F}_{2^n}$ such that $f'(x) = L_1(f(L_2(x) + \alpha)) + L_1(f(\alpha))$.
            \item We say that $f'$ is linearly equivalent to $f$ if $f'$ is affine equivalent to $f$ with $\alpha  = 0$.
        \end{enumerate}
\end{defn}

Since their introduction, crooked functions have found many applications, including the construction of new uniformly packed codes and Preparata-like codes \cite{crooked_2000}. Godsil and Roy \cite{godsil_crooked} have shown that crooked functions are characterized by the corresponding construction of Preparata-like codes. We describe these codes below.

\begin{defn}
Let $f$ be a function over $\F_{2^n}$. Let $X \subset \F_{2^n}^*$ and let  $ Y \subset \F_{2^n}$. Denote by $\mathbbm{1}_X$ the characteristic vector of length $2^n-1$  of the set $X$ and denote by $\mathbbm{1}_Y$ the characteristic vector of length $2^{n}$ of the set $Y$. Define the code $P_f$ to be the set of all codewords of the form $(\mathbbm{1}_X, \mathbbm{1}_Y)$ with $X \subset \F_{2^n}^*$ and $Y \subset \F_{2^n}$ where $X$, $Y$ satisfy:
\begin{enumerate}
    \item $|Y|$ is even
    \item It holds that 
    $$
    \sum_{x \in X} x = \sum_{y \in Y}y.
    $$
    \item It holds that 
    $$
    \sum_{x \in X} f(x) = \sum_{y \in Y}f(y) + f\left( \sum_{y \in Y}y \right).
    $$
\end{enumerate}
\end{defn}

\noindent \textbf{Remark}: Observe that conditions (1) and (2) are precisely the conditions which imply that the codewords also belong to the linear Hamming code of length $2^{n + 1} - 1$.

\begin{prop}[\cite{godsil_crooked, crooked_2000}]
    The code $P_f$ is a Preparata-like code if and only if $f$ is crooked.
\end{prop}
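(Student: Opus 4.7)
The plan is to match, one by one, the three defining conditions of $P_f$ and the three crookedness axioms with the elimination of specific shapes of low-distance codeword patterns.

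I would start by noting, as in the Remark preceding the statement, that conditions~(1) and~(2) of the definition of $P_f$ are exactly the parity-check equations for membership in $H_{n+1}$ under the decomposition $\F_2^{n+1} = \F_{2^n} \times \F_2$. Hence $P_f \subseteq H_{n+1}$, the length is $2^{n+1}-1$, and the proposition reduces to showing that minimum distance $5$ and size $2^{2^{n+1}-2(n+1)}$ hold precisely when $f$ is crooked.

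For the distance analysis I take distinct codewords $c_i = (\mathbbm{1}_{X_i}, \mathbbm{1}_{Y_i}) \in P_f$, set $A = X_1 \triangle X_2$, $B = Y_1 \triangle Y_2$, and $s_i = \sum_{y \in Y_i} y$. Subtracting the three defining conditions for $i=1,2$ yields $|B|$ even, $\sum_A x = \sum_B y$, and $\sum_A f(x) + \sum_B f(y) = f(s_1) + f(s_2)$, with $s_1 + s_2 = \sum_B y$. I would then run through all $(|A|,|B|)$ with $|A|+|B|\leq 4$. The pairs $(0,2),(1,0),(2,0)$ are excluded by conditions~(1),~(2) alone; the pairs $(3,0),(0,4),(4,0)$ have $\sum_B y = 0$, so $s_1 = s_2$ and the $f$-equation collapses to $f(\xi_1)+f(\xi_2)+f(\xi_3)+f(\xi_1+\xi_2+\xi_3) = 0$ for distinct $\xi_i$, which is exactly the quantity forbidden by Definition~\ref{crookedfunction}(2) (using $f(0)=0$ when one of the $\xi_i$ equals $0$); and the pairs $(1,2),(2,2)$ yield, after writing $s_2 = s_1 + a$ with $a := \sum_A x = \sum_B y \neq 0$ and padding by $f(0)=0$ in the $(1,2)$ case to introduce $\xi_1 = 0$, the equation $f(\xi_1)+f(\xi_1+a)+f(\xi_2)+f(\xi_2+a)+f(\xi_3)+f(\xi_3+a) = 0$, exactly the quantity forbidden by Definition~\ref{crookedfunction}(3). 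Thus minimum distance at least $5$ is equivalent to axioms~(2) and~(3) of crookedness, and reading the case analysis backward gives the converse.

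For the size, fix $Y \subset \F_{2^n}$ of even cardinality; the admissible $X$'s lie in a single fibre of the $\F_2$-linear map $\phi\colon \F_2^{\F_{2^n}^*} \to \F_{2^n}^2$, $\mathbbm{1}_X \mapsto (\sum_{x\in X} x, \sum_{x\in X} f(x))$. It suffices to show that crookedness forces $\phi$ to be surjective (rank $2n$ over $\F_2$), for then each fibre has size $2^{2^n-1-2n}$ and summing over the $2^{2^n-1}$ even subsets $Y$ gives $|P_f| = 2^{2^{n+1}-2(n+1)}$. A non-surjective $\phi$ would produce $(\alpha,\beta)\neq(0,0)$ with $\mathrm{Tr}(\alpha x + \beta f(x))\equiv 0$, whence $\mathrm{Tr}(\beta(f(x+y)+f(x)+f(y)))\equiv 0$ for all $x,y$; fixing $y\neq 0$ and using the fact that for crooked $f$ the image of the derivative $\Delta_y f$ is an affine hyperplane not containing $0$ rules out such a $\beta\neq 0$, and hence $\alpha = 0$ as well.

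The main obstacle, in my view, is the surjectivity of $\phi$: this is the step where crookedness must be exploited globally, via the geometry of the derivatives $\Delta_y f$, rather than pointwise. A secondary bookkeeping point is the $(1,2)$ case of the distance argument, where the naturally five-term equation must be padded by $f(0)=0$ in order to recognize it as the six-term expression in Definition~\ref{crookedfunction}(3).
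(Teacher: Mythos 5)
This proposition is only cited in the paper (from \cite{godsil_crooked, crooked_2000}), not proved, so there is no in-paper argument to compare against; what you have reconstructed is essentially the standard proof, and the distance half of it is correct and complete. The reduction to $A=X_1\triangle X_2$, $B=Y_1\triangle Y_2$, the case split over $(|A|,|B|)$ with $|B|$ even, and the padding by $f(0)=0$ to turn the five-term equation of the $(1,2)$ case into the six-term expression of Definition~\ref{crookedfunction}(3) all check out. (For the converse you should be explicit that each forbidden configuration can actually be realized by a pair of codewords, and say how $f(0)=0$ is recovered --- note that $(\emptyset,\emptyset)\in P_f$ iff $f(0)=0$, so this is not automatic from ``distance $\ge 5$'' alone.)

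The genuine gap is in the surjectivity of $\phi$. From $\mathrm{Tr}(\alpha x+\beta f(x))\equiv 0$ you correctly derive $\mathrm{Tr}\bigl(\beta(f(x+y)+f(x)+f(y))\bigr)\equiv 0$, i.e.\ $\mathrm{Tr}(\beta\, D_yf(x))=\mathrm{Tr}(\beta f(y))$ for all $x$. But fixing a \emph{single} $y\neq 0$ does not rule out $\beta\neq 0$: the image $H_y$ of $D_yf$ is an affine hyperplane, and $\mathrm{Tr}(\beta\,\cdot)$ is perfectly capable of being constant on one affine hyperplane --- namely whenever $H_y$ is a coset of $\ker\mathrm{Tr}(\beta\,\cdot)$. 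The fact that $0\notin H_y$ does not help, since the constant value $\mathrm{Tr}(\beta f(y))$ is attained at $f(y)=D_yf(0)\in H_y$ and need not be $0$. One fixed $y$ therefore only pins $\beta$ down to the single nonzero candidate whose trace-kernel is the direction of $H_y$. To close the argument you must vary $y$ and invoke the \emph{other} half of Proposition~\ref{P: Bending}: the sets $H_y$ are pairwise distinct, so the $2^n-1$ linear hyperplanes $H_y+f(y)$ exhaust all hyperplanes of $\F_{2^n}$; a nonzero $\beta$ cannot have its trace-form vanish on two distinct hyperplanes (their span is everything for $n\ge 2$), whence $\beta=0$ and then $\alpha=0$. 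This is exactly the step where crookedness enters globally, as you anticipated, but the single-direction argument you wrote down does not suffice.
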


Since crooked Preparata-like codes of length $2^{n+1} - 1$ are contained in the linear Hamming code of the same length, Theorem \ref{T: Partition} implies that crooked functions also give rise to parallelisms of $\PG(n, 2)$. Next, we explicitly describe how one can construct a parallelism from a crooked function. For this we identify PG$(n,2)$ with $\F_2^{n+1}$ which we identify with $\F_{2^n}\times \F_2$. 

\begin{defn} \label{D: coloring via f}
Let $f$ be a crooked function over $\F_{2^n}$.
 We define the coloring function $c_f:(\F_{2^n}\times \F_2)\times (\F_{2^n}\times \F_2) \rightarrow \F_{2^n}$ via 
    $$ c_f((x,x_1),(y,y_1))=f(x + y) + f(x) + f(y) + f(x_1y + y_1x). $$
\end{defn}

In Section \ref{Section construction}, we show that $c_f$ induces a coloring of the lines of PG$(n,2)$, where the colors are the elements of $\F_{2^n}^*$.

\begin{thm} \label{T: c_f is parallelism}
    Let $f$ be a crooked function over $\F_{2^n}$. The color classes of $c_f$ constitute pairwise disjoint spreads and the induced parallelism is denoted by $\Pi_f$.
\end{thm}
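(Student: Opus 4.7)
The plan is to reduce the theorem to three facts about $c_f$: (i) it is well-defined on lines of $\PG(n,2)$; (ii) its image lies in $\F_{2^n}^*$; and (iii) any two distinct lines sharing a common point receive distinct $c_f$-values. Once these are established, a counting argument finishes the proof: through each point of $\PG(n,2)$ there pass exactly $2^n - 1 = |\F_{2^n}^*|$ lines, so (iii) forces each color to appear on exactly one line through each point, making each color class a spread; the color classes then collectively partition all lines into a parallelism.

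Fact (i) amounts to verifying that $c_f$ is symmetric in its arguments and invariant under replacing one argument $v$ by $u + v$, both by direct substitution into the defining formula; the second identity uses that $x_1(x+y) + (x_1+y_1)x = x_1 y + y_1 x$ in characteristic $2$.

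For (ii) and (iii) the strategy is to rewrite each claim as the non-vanishing of a sum of $f$-values and invoke the appropriate crookedness condition. For (ii), a case analysis on the second coordinates $(x_1, y_1) \in \F_2^2$ of two points on $L$ rewrites $c_f(L)$ as $f(u_1) + f(u_2) + f(u_3) + f(u_1 + u_2 + u_3)$ for three distinct $u_i \in \F_{2^n}$, which is nonzero by condition (2). For (iii), let $p$ be the shared point and parameterize $L_i = \{p, v_i, p + v_i\}$ for $i = 1, 2$. If $p = (x, 0)$, subcases on the second coordinates of $v_1, v_2$ reduce $c_f(L_1) + c_f(L_2)$ either to a $4$-term sum falling under condition (2), or to a $5$-term sum which, after appending $f(0) = 0$, matches the $6$-term pattern of condition (3) with shift $t = x \neq 0$. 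If $p = (x, 1)$, every line through $p$ is uniquely parameterized by the first coordinate $c \in \F_{2^n}^*$ of its unique point with second coordinate $0$, and $c_f$ reduces to $f(x) + f(x + c)$; distinctness in the parameter $c$ then follows from injectivity of $f$, which is itself a consequence of condition (3) applied with $x = y$, yielding $f(z) + f(z + a) \neq 0$ for every $z \in \F_{2^n}$ and every $a \in \F_{2^n}^*$.

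The main obstacle will be identifying the correct pairing of the six terms in the mixed subcase of (iii), matching them against the pattern $\{f(u_i), f(u_i + t)\}_{i=1}^{3}$ of condition (3). The essential observation is to take the shift $t$ to be the first coordinate $x$ of the shared point $p$; once this is noticed, the verification across the remaining subcases is routine, and the distinctness hypotheses needed for condition (2) follow from $L_1 \neq L_2$ together with $p \neq 0$.
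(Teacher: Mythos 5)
Your proposal is correct and follows essentially the same route as the paper's proof: well-definedness of $c_f$ on lines, nonvanishing of the color, a case analysis relative to the hyperplane $\HH = \F_{2^n}\times\{0\}$ showing intersecting lines get distinct colors (your 4-term/condition-(2) subcases are the paper's APN cases, your 5-term/condition-(3) subcase is the paper's Case~3, and your $p=(x,1)$ case is the paper's permutation argument), and the same $2^n-1$ counting at the end. One small inaccuracy in step (ii): for a line not contained in $\HH$, $c_f(L)$ collapses to a two-term sum $f(u)+f(v)$ with $u\neq v$ (no choice of two points on such a line produces a four-term sum with three distinct arguments, since for distinct $u_1,u_2,u_3$ the element $u_1+u_2+u_3$ is automatically distinct from all three), so condition (2) does not apply there; nonvanishing in those subcases instead follows from the injectivity of $f$, which you already derive from condition (3) with $x=y$, so the fix is immediate.
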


Two parallelisms $\Pi_1$ and $\Pi_2$ of PG$(n,2)$ are said to be equivalent if there exists a collineation of PG$(n, 2)$ which maps the spreads of $\Pi_1$ to the spreads of $\Pi_2$. Our third main result establishes an equivalence criterion for parallelisms coming from our construction.

\begin{thm}\label{T: main equivalence}
Let $f$ and $f'$ be crooked functions over $\F_{2^n}$ with $n>1$ odd and let $\Pi_f$ and $\Pi_{f'}$ be the parallelisms induced by $c_f$ and $c_{f'}$. 
\begin{enumerate}
    \item If $f$ and $f'$ are linearly equivalent, then $\Pi_f$ and $\Pi_{f'}$ are equivalent.
    \item Suppose further that $f$ and $f'$ are quadratic and that $n 
    > 3$. It holds that $\Pi_f$ and $\Pi_f'$ are equivalent if and only if 
    $f$ and $f'$ are affine equivalent.
\end{enumerate}
\end{thm}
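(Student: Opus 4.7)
For part (1), the plan is to lift the linear equivalence to an explicit collineation of $\PG(n,2)$. Writing $f'(x) = L_1(f(L_2(x)))$, define $\phi: \F_{2^n}\times\F_2 \to \F_{2^n}\times\F_2$ by $\phi(x,x_1) := (L_2(x), x_1)$; this map is $\F_2$-linear and hence induces a collineation. Substituting into Definition \ref{D: coloring via f} and using $\F_2$-linearity of $L_2$ to commute with the bits $x_1,y_1\in\F_2$, one obtains
$$c_{f'}((x,x_1),(y,y_1)) = L_1\bigl(c_f(\phi(x,x_1),\phi(y,y_1))\bigr).$$
Since $L_1$ permutes $\F_{2^n}^*$ bijectively, $\phi$ maps spreads of $\Pi_{f'}$ onto spreads of $\Pi_f$, witnessing the equivalence.

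For the forward direction of part (2), I would exploit the quadratic hypothesis to reduce affine equivalence to linear equivalence plus a linear perturbation. Let $B_f(u,v) := f(u+v)+f(u)+f(v)$ denote the symmetric bilinear form of $f$. The identity $f(L_2(x)+\alpha) = f(L_2(x)) + f(\alpha) + B_f(L_2(x),\alpha)$ converts the affine equivalence $f'(x) = L_1(f(L_2(x)+\alpha)) + L_1(f(\alpha))$ into $f' = g + \tilde\ell$, where $g := L_1 \circ f \circ L_2$ is linearly equivalent to $f$ (and still quadratic crooked) and $\tilde\ell(x) := L_1(B_f(L_2(x),\alpha))$ is $\F_2$-linear. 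Using the identity $B_g(u,v) = L_1(B_f(L_2(u),L_2(v)))$, one sees $\tilde\ell = B_g(\cdot, b)$ with $b := L_2^{-1}(\alpha)$. Linearity of $\tilde\ell$ then forces $c_{f'} = c_g + B_g(x_1 y + y_1 x, b)$, while the second collineation $\psi(x,x_1) := (x + x_1 b, x_1)$ reproduces precisely this offset via the quadratic expansion of $g$:
$$c_g(\psi(P), \psi(Q)) = c_g(P,Q) + B_g(x_1 y + y_1 x, b).$$
Composing $\psi$ with the part-(1) equivalence from $\Pi_g$ to $\Pi_f$ yields the desired equivalence $\Pi_f \sim \Pi_{f'}$.

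The reverse direction of part (2) is the core difficulty. Given any collineation $\Phi$ realizing $\Pi_f \sim \Pi_{f'}$, the plan is to reconstruct an affine equivalence between $f$ and $f'$ by first normalizing $\Phi$, through composition with collineations of the types $\phi$ and $\psi$ produced in the forward direction, so that $\Phi$ preserves the distinguished hyperplane $H := \F_{2^n}\times\{0\}$. The key observation is that on lines contained in $H$, the coloring $c_f$ collapses to the bilinear form $B_f$; this should characterize $H$ among all hyperplanes of $\PG(n,2)$ via an invariant of the parallelism, for example the fact that the restricted coloring is $\F_2$-bilinear or carries a recognizable dual-spread structure. Identifying and proving such a characterization is the main obstacle, and I expect the hypothesis $n > 3$ to be essential here to eliminate low-dimensional coincidences in which other hyperplanes could also support bilinear colorings. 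Once $\Phi(H) = H$ is secured, the restriction of $\Phi$ to $H$ yields an equivalence of the bilinear forms $B_f$ and $B_{f'}$, from which candidates for $L_1$ and $L_2$ are extracted via a rigidity argument for quadratic crooked functions; tracking the image of a point outside $H$ then pinpoints the shift $\alpha$ and completes the recovery of an affine equivalence between $f$ and $f'$.
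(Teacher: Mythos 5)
Your part (1) and your proof that affine equivalence of $f$ and $f'$ forces equivalence of the parallelisms are both correct. The latter takes a mildly different route from the paper: you split the affine equivalence into a linear equivalence plus the linear perturbation $B_g(\cdot,b)$ and absorb the perturbation into the translation-type collineation $\psi(x,x_1)=(x+x_1b,x_1)$, whereas the paper verifies the single collineation $\kappa(x,x_1)=(Ax+\alpha x_1,x_1)$ by one direct computation; the two are equivalent in substance, both resting on the bilinearity of $B_g$.

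The converse implication of part (2) is where the content of the theorem lies, and your proposal does not prove it. You write that the hyperplane $\mathcal{H}=\F_{2^n}\times\{0\}$ ``should'' be characterized by an invariant of the parallelism and that identifying and proving such a characterization is ``the main obstacle''; that obstacle is exactly the heart of the paper's proof and is left open in your write-up. Concretely, the paper shows $\kappa(\mathcal{H})=\mathcal{H}$ as follows: if not, then $\sigma c_f$ is bilinear on the hyperplane $\kappa^{-1}(\mathcal{H})$, and a counting lemma (for $n>3$, every codimension-$2$ subspace $\mathcal{C}\subset\mathcal{H}$ admits a spread of $\Pi_f$ with one line inside $\mathcal{C}$ and another line of the relevant hyperplane outside $\mathcal{C}$, because the hyperplanes $B_f(a,\cdot)(\mathcal{H})$ are pairwise distinct while only three hyperplanes of $\F_{2^n}$ can share a fixed codimension-$2$ intersection) produces two lines $m_1,m_2$ that bilinearity of $\sigma c_f$ forces into one spread of $\Pi_f$, while a direct computation with $c_f$ and the crookedness of $f$ places them in different spreads; note this uses only injectivity of the colour permutation $\sigma$, not any structure on it. A second omission: once $\kappa(\mathcal{H})=\mathcal{H}$ is known you only get $\sigma B_f(x,y)=B_{f'}(Ax,Ay)$ for an a priori arbitrary permutation $\sigma$ of $\F_{2^n}$, and to recover an affine equivalence of functions you must prove $\sigma$ is additive. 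The paper does this in Lemma \ref{L: sigma linear} by showing any two colours $u,v$ are realized on intersecting lines inside $\mathcal{H}$, so that $\sigma(u+v)=\sigma(u)+\sigma(v)$ follows from bilinearity on both sides; your outline skips this entirely. Finally, no ``rigidity argument'' is needed at the end: evaluating the identity $\sigma c_{f'}=c_f\kappa$ at the pair $((x,1),(0,1))$ reads off $f'(x)=\sigma^{-1}f(Ax+\alpha)+\sigma^{-1}f(\alpha)$ directly.
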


Below, we give a brief survey of the known families of inequivalent crooked functions. This requires us to first take a detour through the world of almost perfect nonlinear (APN) functions which contain the class of crooked functions \cite{Bending1998}. 

A function $f$ on $\F_{2^n}$ is said to be APN if the equation 
$$
f(x + a) + f(x) = b
$$
has precisely 0 or 2 solutions in $x$ for each $a, b \in \F_{2^n}$, and $a \neq 0$. 
Observe that any crooked function is necessarily APN \cite{pott_survey}.
APN functions were introduced by Nyberg \cite{eurocrypt-1993-2628}, who demonstrated their utility in cryptography. Several notions of equivalence, such as linear equivalence, affine equivalence, (extended affine) EA-equivalence and CCZ-equivalence have been introduced in the study of APN functions and much work has gone into finding new inequivalent APN functions \cite{BCL_APN, new_apn_trivariate}. 
The definition of linear equivalence and affine equivalence for APN functions are the same as for crooked functions. EA-equivalence however is a more general notion. Two APN functions $f$ and $f'$ are EA-equivalent if there exist affine permutations $A_1, A_2$ and an affine function $A_3$ such that $ f'(x) = A_1(f(A_2(x))) + A_3(x)$. We remark that two crooked functions that are affine equivalent are necessarily also EA-equivalent as APN functions.

It is known that crooked functions are necessarily bijective \cite{Bending1998}. It follows that any quadratic APN permutation is crooked. To date, the only known crooked functions are quadratic. It has been conjectured that any crooked function must be quadratic, and this conjecture has been confirmed in some special cases \cite{bierbrauer2008crooked, kyureghyan2007crooked}. Until 2008, the only known crooked functions were precisely $f(x) = x^{2^t + 1}$ when $n$ is odd and $\gcd(n, t) = 1$. Since then, several new infinite families have been discovered \cite{BCL_APN, new_apn_trivariate}. Consequently, our construction implies the existence of inequivalent parallelisms of PG$(n, 2)$ to those from \cite{baker, BVW, JohnsonMontinaro, parallelisms_1973}. 

The following list contains the only known examples of crooked functions, all of which are quadratic APN permutations. We remark that the families (1) and (2) below have been proven to be EA-inequivalent to each other \cite{BCL_APN}. Family (3) is conjectured to be inequivalent to families (1) and (2).

\begin{enumerate}
    \item Let $n$ be odd and gcd$(t, n) = 1$. Then
    $$
    f(x) = x^{2^t + 1}
    $$ 
    is a quadratic APN permutation over $\F_{2^n}$. These correspond to the generalized Preparata codes. The functions of the form above (without restrictions on $n$ and $t$) are referred to as \textit{Gold functions} \cite{gold}. Their APN properties were observed by Nyberg \cite{eurocrypt-1993-2628}. It is known that $x^{2^t + 1}$ and $x^{2^s + 1}$ are EA-equivalent if and only if $s = t$ or $s = n - t$.
    \item Let $s$ and $k$ be positive integers such that $\gcd(6, k) = \gcd(3k, s) = 1$. Define $i = sk \pmod{3}$, $t = 3 - i$ and $n = 3k$. Let $w \in \F_{2^n}^*$ be an element of order $2^{2k} + 2^k + 1$. Then
    $$
    f(x) = x^{2^s + 1} + wx^{2^{ik} + 2^{tk + s}}
    $$
    is a quadratic APN permutation over $\F_{2^n}$. This family was constructed by Budaghyan, Carlet and Leander \cite{BCL_APN}. To our knowledge no equivalence criterion has been given which specifies the parameter sets $(s, w)$ that yield EA-equivalent functions.
\item Let $n$ be an odd positive integer not divisible by 7. Let $i$ a positive integer satisfying $\gcd(i, n) = 1$ and set $q = 2^i$. Then the function (in triviate form)
$$
f(x, y, z) = (x^{q+1} + xy^q + yz^q, xy^q + z^{q+1}, x^qz + y^{q+1} + y^q)
$$
yields a quadratic APN permutation over $\F_{2^{3n}}$ for each valid choice of $i$. Furthermore, each choice of $i$ yields an EA-inequivalent function. This class of functions was constructed recently by  Li and Kaleyski \cite{new_apn_trivariate}. The inequivalence result was given by Gao, Kan, Peng, Shi \cite[Corollary 1]{shi2025ccz}.
\end{enumerate}

\noindent \textbf{Remark}: We note that before the family (3) above was discovered in \cite{new_apn_trivariate}, there were two sporadic examples of quadratic APN permutations over $\F_{2^9}$ which were found by Beierle and Leander \cite{BierleLeander} in 2022 by computer search. These were shown to be equivalent to the specific instances of (3) when $n = 3$ \cite{new_apn_trivariate}.

We are not the first to study incidence structures in $\PG(n, q)$, or codes which are linked to APN functions. In particular, a number of topics have been explored recently in this area including the construction of linear codes, designs, and other geometric structures, see \cite{Pott_APN_planarity, Pott2}. For more information on APN functions and their connections with finite geometry, see also the survey of Pott \cite{pott_survey}.

The rest of the paper is organized as follows: In Section \ref{Section Preliminaries} we give the necessary preliminaries on APN functions and crooked functions, as well as on some notions in finite geometry regarding spreads and parallelisms. In Section \ref{Section proof of Theorem 1} we prove Theorem \ref{T: Partition}. In Section \ref{Section construction}, we give an explicit description of the line parallelisms implied by Theorem \ref{T: Partition}, thus proving Theorem \ref{T: c_f is parallelism}. This description is then utilized in Section \ref{Section equivalence} where we prove Theorem \ref{T: main equivalence}.

\section{Preliminaries}
\label{Section Preliminaries}

In this section we introduce crooked and APN functions as well as parallelisms as separate objects and collect some preliminary results.

\subsection{APN and crooked functions}

Let $f$ be an APN function over $\F_{2^n}$. A key tool which is often used in the study of these functions is the  \textit{derivative of $f$ in the direction of $a\in \F_{2^n}^*$}, denoted 
    $$
    D_a(f(x)) = f(x + a) + f(x).
    $$

It was shown in \cite{Bending1998} that crooked functions can be characterized by the image sets of their derivatives in any fixed direction.

\begin{prop}[\cite{Bending1998}]\label{P: Bending}
    Let $f$ be a function on $\F_{2^n}$ satisfying $f(0) = 0$ and let $a\in \F_{2^n}^*$.  Then $f$ is crooked if and only if the sets
    $$
    H_a = \{ D_a(f(x)) : x \in \F_{2^n}\}
    $$
    are all distinct and each $H_a$ is the complement of a hyperplane in $\F_{2^n}$. 
\end{prop}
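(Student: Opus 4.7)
The plan is to argue both implications directly from Definition~\ref{crookedfunction}. Two preliminary observations will be used throughout. Setting $y = z$ in condition~(3) gives $D_a f(x) \neq 0$ for every $a \neq 0$ and every $x$, so $0 \notin H_a$ and $f$ is injective. Next, condition~(2) is equivalent to $f$ being APN: if $D_a f(x) = D_a f(y)$ with $y \notin \{x, x+a\}$, then $x, y, y+a$ are distinct with $x + y + (y+a) = x+a$, and $f(x) + f(y) + f(y+a) + f(x+a) = 0$ is precisely the expression appearing in condition~(2). Since $D_a f$ identifies $x$ with $x+a$, APN is equivalent to $|H_a| = 2^{n-1}$.

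For the forward direction, assume $f$ is crooked and set $V_a := \F_{2^n} \setminus H_a$. Rewriting condition~(3) as $D_a f(x) + D_a f(y) + D_a f(z) \neq 0$ shows that $u + v + w \neq 0$ for all $u, v, w \in H_a$, i.e.\ $H_a + H_a \subseteq V_a$. Fix any $h \in H_a$; translation by $h$ injects $H_a$ into $V_a$, and by cardinality it is a bijection, which iterates to $V_a + h = H_a$. For any $v_1, v_2 \in V_a$ we then have $v_1 + h,\, v_2 + h \in H_a$, hence
\[ v_1 + v_2 = (v_1 + h) + (v_2 + h) \in H_a + H_a \subseteq V_a. \]
Combined with $0 \in V_a$, this shows $V_a$ is a linear hyperplane. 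For distinctness, the identity $D_{a+b} f(x) = D_a f(x) + D_b f(x+a)$ shows that if $V_a = V_b$ then both summands lie in the common coset $H_a = H_b$, so their sum lies in $V_a$; hence $H_{a+b} \subseteq V_a$, and a cardinality comparison forces $H_{a+b} = V_a$, which is impossible since $0 \in V_a$ while $0 \notin H_{a+b}$.

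For the converse, suppose each $H_a$ is the complement of a linear hyperplane $V_a$. Then $|H_a| = 2^{n-1}$, so $f$ is APN, which gives condition~(2). For condition~(3), write $H_a = v_0 + V_a$ for some $v_0 \notin V_a$; since $3 v_0 = v_0$ in characteristic two and $V_a + V_a + V_a = V_a$,
\[ D_a f(x) + D_a f(y) + D_a f(z) \in (v_0 + V_a) + (v_0 + V_a) + (v_0 + V_a) = v_0 + V_a = H_a, \]
which excludes $0$ since $0 \in V_a$. Hence $f$ is crooked. The main obstacle is the closure step in the forward direction: promoting the triple-sum condition $H_a + H_a \subseteq V_a$ to $V_a + V_a \subseteq V_a$ via the translation-swap by an element $h \in H_a$, which is precisely what upgrades $V_a$ from a set of the correct cardinality to an honest linear subspace.
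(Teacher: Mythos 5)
The paper does not prove this proposition; it is quoted directly from \cite{Bending1998}, so there is no internal proof to compare against. Your argument is correct and is essentially the standard one: condition (3) with $y=z$ gives $0\notin H_a$, condition (2) is the APN property giving $|H_a|=2^{n-1}$, condition (3) in general gives $H_a+H_a\subseteq \F_{2^n}\setminus H_a$, and the translation-by-$h$ cardinality swap upgrades the complement to a genuine linear hyperplane; the identity $D_{a+b}f(x)=D_af(x)+D_bf(x+a)$ then yields distinctness, and the converse follows by reversing the coset computation. All steps check out (in particular $a+b\neq 0$ in the distinctness step, and the converse correctly needs the hyperplane to be linear, which is the intended reading confirmed by the remark following the proposition).
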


\noindent \textbf{Remark}: Consider a crooked function $f$, let $a\in \F_{2^n}^*$ and let $H_a$ be as in the proposition above. As $f(a)\in H_a$, we have that $H_a$ is the complement of the hyperplane defined by $H_a + f(a)$.

An important property of crooked functions, which can be proven quickly from the definition, is that any crooked function must be bijective. Our construction of the parallelisms of $\PG(n, 2)$ using crooked functions will rely on this fact, among other things.

\begin{lem}[\cite{Bending1998}]  \label{L: f is permutation}
Let $f$ be a crooked function over $\F_{2^n}$. Then $f$ is a permutation.
\end{lem}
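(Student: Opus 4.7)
The plan is to reduce the statement to injectivity of $f$, from which being a permutation is automatic because the domain $\F_{2^n}$ is finite. The key observation is that condition (3) of Definition \ref{crookedfunction} degenerates pleasantly when one sets $x=y=z$. Pick any $u \in \F_{2^n}$ and any $a \in \F_{2^n}^*$. In characteristic $2$ one has
\[
f(u) + f(u) + f(u) + f(u+a) + f(u+a) + f(u+a) \;=\; f(u) + f(u+a),
\]
and the crooked axiom (3) forces this sum to be nonzero. Hence $f(u+a) \neq f(u)$ whenever $a \neq 0$, which is precisely injectivity; finiteness of $\F_{2^n}$ then gives that $f$ is a permutation.

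An equally short alternative is to invoke Proposition \ref{P: Bending}: for each $a \in \F_{2^n}^*$, the set $H_a = \{ D_a f(x) : x \in \F_{2^n} \}$ is the complement of a hyperplane, and since every hyperplane contains $0$ we get $0 \notin H_a$. Thus $f(x+a) + f(x) \neq 0$ for every $x$, which once again is injectivity. Either route is a one-liner once the right specialization is spotted; the only point requiring care is that condition (3) is stated for \emph{any} $x,y,z$, not merely distinct ones, so the collapse $x=y=z$ is genuinely allowed. I do not anticipate any substantive obstacle.
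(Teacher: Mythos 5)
Your proof is correct: the paper gives no proof of this lemma (it is cited from Bending and Fon-der-Flaass, with the remark that it "can be proven quickly from the definition"), and your specialization $x=y=z$ in condition (3) of Definition \ref{crookedfunction} is exactly that quick argument — the collapse is legitimate since, unlike condition (2), condition (3) imposes no distinctness on $x,y,z$. The alternative via Proposition \ref{P: Bending} (that $0\notin H_a$ because $H_a$ is the complement of a hyperplane) is equally valid and consistent with the paper.
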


For every  APN function $f$, there is an associated symmetric bivariate function which will also play a key role in our construction.

\begin{defn}
For an APN function $f$ we define the bivariate function $B_f:\F_{2^n}\times \F_{2^n}\rightarrow \F_{2^n}$ by
    $B_f(x,y)=f(x)+f(y)+f(x+y)$.
\end{defn}

If $f$ is quadratic, then it follows that $B_f(x, y)$ is bilinear. It has in fact been shown that if $f$ and $f'$ are two quadratic APN functions for which $B_f = B_{f'}$, then  $f$ and $f'$ are EA-equivalent (see Lemma 5 in \cite{Yves}).

\begin{lem} \label{L: well defined on H}
    Let $f$ be a function over $\mathbb{F}_{2^n}$.  Then $f$ is APN if and only if $B_f(a, x) = b$ has zero or two solutions in $x$ for each $a \in \F_{2^n}^*$ and $b \in \F_{2^n}$.
\end{lem}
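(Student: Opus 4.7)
The plan is to prove the lemma by a direct unpacking of the definition of $B_f$ and matching it against the defining equation of the APN property. The key observation is that $B_f$ is, up to a constant shift depending only on $a$, nothing other than the derivative $D_a(f)$.

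Explicitly, I would start by writing
$$
B_f(a,x) = f(a) + f(x) + f(a+x) = f(a) + D_a(f(x))
$$
for every $a,x\in\F_{2^n}$. Thus, for any fixed $a\in\F_{2^n}^{*}$ and any $b\in\F_{2^n}$, the equation $B_f(a,x)=b$ is equivalent to $D_a(f(x)) = b + f(a)$, and hence has exactly the same solution set in $x$. This is the only substantive computation; everything else is a bookkeeping argument about how the parameter $b$ ranges.

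For the forward direction, suppose $f$ is APN. Fix $a\in\F_{2^n}^{*}$ and $b\in\F_{2^n}$, and set $b'=b+f(a)$. By APN-ness the equation $D_a(f(x))=b'$ has $0$ or $2$ solutions, and by the equivalence above so does $B_f(a,x)=b$. For the reverse direction, assume the $B_f$-condition and fix $a\in\F_{2^n}^{*}$ and $b'\in\F_{2^n}$; setting $b=b'+f(a)$, the equivalence again transfers the $0$-or-$2$ conclusion from $B_f(a,x)=b$ to $D_a(f(x))=b'$, which is precisely the APN property. The only thing to note is that as $b$ ranges over all of $\F_{2^n}$, so does $b+f(a)$, so no value of $b'$ is missed.

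There is no real obstacle here: the content of the lemma is purely a translation between two equivalent ways of writing the same equation, and the proof is a one-line substitution together with the remark that translation by $f(a)$ is a bijection of $\F_{2^n}$.
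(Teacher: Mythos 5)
Your proof is correct and follows essentially the same route as the paper's: both reduce $B_f(a,x)=b$ to $D_a(f(x))=b+f(a)$ and note that the shift by $f(a)$ is a bijection of $\F_{2^n}$, so the zero-or-two-solutions property transfers in both directions.
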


\begin{proof}
    Recall that $f$ is an APN function if and only if for each $a \neq 0$, we have that $f(x) + f(x + a) =b$ has zero or two solutions for any $b\in \F_{2^n}$. 
    By defining $b':=f(a)+b$, we have that $f$ is an APN function if and only if
 $f(x) + f(x + a) = b'$ has zero or two solutions.
    Equivalently, $f$ is an APN function if and only if for each $a \neq 0$ we have that
    $$
   B_f(a, x) = f(x) + f(x + a) + f(a) = b
    $$
    has zero or two solutions. 
\end{proof}

\subsection{Spreads and Parallelisms}

In this preliminary subsection, we collect results on parallelisms.
For integers $q,a,b\in\mathbb{Z}$ with $q\ge 2$ we define the Gaussian coefficient
 \begin{align*}
 \qGauss{b}{a}:=
 \begin{cases}
\displaystyle \prod_{i=1}^a\frac{q^{b-a+i}-1}{q^i-1} & \text{if $0\le a\le b$,}
\\
0 & \text{otherwise.} \end{cases}
\end{align*}
Unless specified otherwise we can always assume that $n$ is an odd, positive integer.

Recall from the introduction that a spread of $\PG(n,q)$ is a partition of the points into lines.
Furthermore, a parallelism $\Pi$ of $\PG(n,q)$ is a partition of the lines into spreads.

\begin{defn} \label{D: equivalence parallelisms}
    Two parallelisms $\Pi_1$ and $\Pi_2$ of $\PG(n,q)$ are equivalent if there exists a collineation $\kappa$ on $\PG(n,q)$ that induces a one-to-one correspondence between the spreads of $\Pi_1$ and the spreads of $\Pi_2$.
\end{defn}

Let $P$ be a fixed point of $\PG(n,q)$. No two distinct lines incident with $P$ can be in one spread, but every spread has to contain a line that is incident with $P$. This implies the following: If $\Pi$ is a parallelism of $\PG(n,q)$ the number of spreads in $\Pi$ is equal to the number of lines incident with $P$, which is $\qGauss{n}{1}$.

The size of a spread can be calculated as the quotient of the number of points in $\PG(n,q)$ and the number of points on each line. We conclude that a spread has size 
$$\frac{\qGauss{n+1}{1}}{q+1}=q^{n-1}+...+q^4+q^2+1.$$

\begin{lem} \label{L: spread inside hyperplane size}
For any spread and any hyperplane of $\PG(n,q)$, the number of lines of the spread contained inside the hyperplane is $q^{n-3}+...+q^4+q^2+1$.
\end{lem}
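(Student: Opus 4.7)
The plan is to prove this by a straightforward double-counting of points. Let $S$ be the given spread and $H$ the given hyperplane of $\PG(n,q)$ with $n$ odd. Since every line of $\PG(n,q)$ either lies entirely in $H$ or meets $H$ in exactly one point, I would partition the lines of $S$ into those contained in $H$ and those meeting $H$ in a single point. Let $k$ denote the number of lines of $S$ contained in $H$.

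Next I would use the fact, derived already in the paragraph above the lemma, that $|S| = q^{n-1} + q^{n-3} + \dots + q^{2} + 1$. Each of the $k$ lines contained in $H$ contributes $q+1$ points of $H$, while each of the remaining $|S|-k$ lines contributes exactly one point of $H$. Since $S$ partitions the points of $\PG(n,q)$, these contributions account for every point of $H$ exactly once, so
$$
(q+1)k + \bigl(|S|-k\bigr) \;=\; \qGauss{n}{1} \;=\; q^{n-1}+q^{n-2}+\dots+q+1.
$$
Simplifying yields $qk = (q^{n-1}+q^{n-2}+\dots+q+1) - (q^{n-1}+q^{n-3}+\dots+q^{2}+1)$, and collecting the surviving terms on the right side gives $qk = q^{n-2}+q^{n-4}+\dots+q^{3}+q$. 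Dividing by $q$ produces the claimed value $k = q^{n-3}+\dots+q^{4}+q^{2}+1$.

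There is no real obstacle here; the only care needed is to verify that the exponents behave correctly under the assumption that $n$ is odd, so that $n-3, n-5, \dots$ are all even nonnegative integers and the claimed formula is indeed a sum of even powers of $q$ terminating in $1$. Everything else is a direct arithmetic simplification, so the proof should be brief.
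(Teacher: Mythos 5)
Your proposal is correct and follows essentially the same double-counting argument as the paper: counting points of the hyperplane covered by spread lines, with contained lines contributing $q+1$ points and the rest contributing one, then solving the resulting linear equation. The arithmetic matches the paper's computation exactly.
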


\begin{proof}
Let $H$ be a hyperplane and let $S$ be a spread of $\PG(n,q)$.
All points of $H$ are covered by lines of the spread.
Each line of $S$ contained inside $H$ covers $q+1$ points of $H$ and each line of $S$ not contained inside $H$ covers $1$ point of $H$.
Let $s$ be the number of lines of $S$ that are contained in $H$. 
Then $|H|=s(q+1)+|S|-s$ and so 
\begin{align*}
    s&=\frac{|H|-|S|}{q}\\
    &=\frac{\qGauss{n}{1}-\qGauss{n+1}{1}/(q+1)}{q}\\
    &=\frac{ (q^{n-1}+...+q^2+q+1)-(q^{n-3}+...+q^4+q^2+1)}{q}\\
    &=\frac{ q^{n-1}+...+q^5+q^3+q}{q}\\
    &=|S|-q^{n-1}. \qedhere
\end{align*}
\end{proof}

\section{Partitioning the Hamming code}
\label{Section proof of Theorem 1}

In this section we prove Theorem \ref{T: Partition}. The proof is fairly straightforward and follows the proof of this fact for generalized Preparata codes \cite{BVW} quite closely. For convenience we restate our first main result.

\noindent
\textbf{Theorem \ref{T: Partition}.} \emph{Let $P_n$ be a Preparata-like code contained in the linear Hamming code $H_n$ of the same length. Then $H_n$ can be partitioned into additive translates of $P_n$.}

\begin{proof}
    Suppose that $P_n$ is contained in $H_n$. 
    First, assume that the  zero codeword is in $P_n$.
    Since $H_n$ is a binary linear Hamming code, we can identify the coordinates of the codewords with the non-zero vectors of $\F_{2}^{n}$. Let $\{ e_1, e_2, \dots, e_{n} \}$ be the usual standard ordered basis for $\F_{2}^{n}$ and define $T := \text{span}\{e_2, \dots, e_{n} \} \setminus \{0 \}$. Let $\alpha \in T$  and define $t_\alpha$ to be the characteristic vector (with coordinates ordered in the same way as the codewords in $H_n$) of the set $\{ e_1, \alpha, e_1 + \alpha \}$. 
    As the sum of the elements in $\{ e_1, \alpha, e_1 + \alpha \}$ equals $0$, we have that $t_\alpha \in H_n$ for any such $\alpha$. Furthermore, we observe that $t_\alpha$ has weight 3, and $t_\alpha + t_\beta$ has weight 4 if $\alpha \neq \beta$, as $t_\alpha$ and $t_\beta$ have a common element in their support. 
    
    Let $P_n$ be the Preparata-like code contained in $H$ and define the set $P_n+ t_\alpha = \{ p + t_\alpha : p \in P_n\}$. We claim that the sets
    \begin{align}\label{E: Partition}
    \{ P_n \} \cup \{ P_n + t_\alpha : \alpha \in T \}    
    \end{align}
    form a partition of $H_n$. Observe that since $P_n \subset H_n$, $t_\alpha \in H_n$, and $H_n$ is linear, we have $P_n+ t_\alpha \subset H_n$. As $|P_n +t_\alpha| = |P_n| = 2^{2^{n}-2n}$ and $|H_n| =2^{2^{n} - n - 1}  = 2^{n-1}|P_n| = (|T| + 1)|P_n|$, it suffices to show that the sets (\ref{E: Partition}) are disjoint.
    
    Recall that $P_n$ has minimum distance 5. Since $t_\alpha$ is a vector of weight 3, then $P_n$ and $P_n+t_\alpha$ are necessarily disjoint.  If $\alpha$ and $\beta$ are distinct elements in $T$ with  $(P_n+t_\alpha) \cap (P_n + t_\beta) \neq \emptyset$, then there exists codewords $c, d \in P_n$ such that $c + t_\alpha = d + t_\beta$ which implies $c+d = t_\alpha + t_\beta$. We already noted that $t_\alpha + t_\beta$ has weight 4, which would imply that $c + d$ has weight $4$. This yields that the distance between $c, d \in P_n$ is 4, a contradiction. Thus, (\ref{E: Partition}) is a partition of $H_n$ into additive translates of $P_n$.

    If the zero codeword is not in $P_n$, we may add any codeword $c\in P_n$ to $P_n$ to obtain the code $P_n'=P_n+c$. Now $P_n'$ contains the zero codeword and is still contained in $H_n$, since $H_n$ is linear. As $P_n'$ is an additive translate of $P_n$ the arguments above all work the same way if one replaces $P_n$ by $P_n'$.
\end{proof}

\begin{cor}\label{C: parallelisms}
    Let $P_n$ be a Preparata-like code which embeds into a linear Hamming code $H_n$ of the same length. The partitioning of $H_n$ into translates of $P_n$ induces a line parallelism of $\PG(n-1, 2)$.
\end{cor}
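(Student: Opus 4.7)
My plan is to formalize the sketch given in Section \ref{q=2}. The first step is to apply Theorem \ref{T: Partition} and write $H_n$ as a disjoint union of additive translates $T_0, T_1, \dots, T_{2^{n-1}-1}$ of $P_n$, where (after an initial translation, as in the proof of Theorem \ref{T: Partition}) $T_0 = P_n$ contains the zero codeword. Every $T_i$ has minimum distance $5$ because translation is an isometry of the Hamming space.

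The second step is to use the standard correspondence between weight-$3$ codewords of $H_n$ and lines of $\PG(n-1,2)$: identifying the coordinates of $H_n$ with $\F_2^n\setminus\{0\}$ via the parity-check matrix, a weight-$3$ codeword is precisely the indicator of a triple $\{a,b,a+b\}$ with $a\ne b$ nonzero, and such triples are exactly the lines of $\PG(n-1,2)$. The partition $H_n = \bigsqcup_i T_i$ thus restricts to a partition of all weight-$3$ codewords of $H_n$, and hence to a partition of the set of lines of $\PG(n-1,2)$ into classes $L_0, L_1, \dots, L_{2^{n-1}-1}$, where $L_0=\emptyset$ since $P_n$ has minimum distance $5$ and contains the zero codeword, so it has no weight-$3$ codewords.

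The third step is to promote each $L_i$ from a partial spread to a full spread. Since $T_i$ has minimum distance $5$, any two weight-$3$ codewords in the same $T_i$ differ in at least five coordinates, which forces their supports to be disjoint. Hence the lines in $L_i$ are pairwise disjoint and $|L_i|\le \tfrac{2^n-1}{3}$, the size of a spread of $\PG(n-1,2)$. Using that $\PG(n-1,2)$ has $\Gauss{n}{2}=\tfrac{(2^n-1)(2^{n-1}-1)}{3}$ lines in total,
$$
\sum_{i=1}^{2^{n-1}-1}|L_i| \;=\; \frac{(2^n-1)(2^{n-1}-1)}{3} \;=\; (2^{n-1}-1)\cdot\frac{2^n-1}{3},
$$
so the upper bound $|L_i|\le\tfrac{2^n-1}{3}$ must be saturated for every $i\ge 1$. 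Each $L_i$ is therefore a full spread, and $\{L_1,\dots,L_{2^{n-1}-1}\}$ is a parallelism of $\PG(n-1,2)$.

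The main obstacle, if there is one, is only ruling out that the $L_i$ could be proper partial spreads, and this is handled entirely by the counting identity above. There is no other subtlety: Theorem \ref{T: Partition} has already done all of the work of producing the partition of $H_n$, and the rest is a translation between code-theoretic and projective-geometric language.
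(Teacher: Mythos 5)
Your proposal is correct and follows essentially the same route as the paper: the paper does not give a separate proof of this corollary but derives it from Theorem \ref{T: Partition} together with the discussion in Section \ref{q=2} (weight-$3$ codewords of $H_n$ are the lines of $\PG(n-1,2)$, minimum distance $5$ forces disjoint supports within each translate, and a counting argument shows each nonempty class is a full spread). Your write-up just makes the final count explicit via the total number of lines rather than via $|H_n|=2^{n-1}|P_n|$, which is the same computation.
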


\section{The construction}
\label{Section construction}

Corollary \ref{C: parallelisms} implies the existence of parallelisms coming from any Preparata-like code contained in a linear Hamming code of the same length. 
By definition, the Preparata-like codes constructed in \cite{crooked_2000} using crooked functions are contained in a linear Hamming code of the same length.
In this section, we prove Theorem \ref{T: c_f is parallelism} thus giving an explicit description of the parallelisms which arise from a partitioning of the Hamming code via the corresponding crooked Preparata-like codes.

Identify $\PG(n,2)$ with $\F_2^{n+1}$ and $\F_2^{n+1}$ with $\F_{2^n}\times \F_2$.
In particular, we identify the non-zero vectors $(x,x_1)\in \F_{2^n}\times \F_2$ with points in $\PG(n, 2)$. Recall the coloring function $c_f$, for convenience we restate the definition.

\noindent
\textbf{Definition \ref{D: coloring via f}.}  Let $f$ be a crooked function over $\F_{2^n}$.
 We define the coloring function $c_f:(\F_{2^n}\times \F_2)\times (\F_{2^n}\times \F_2) \rightarrow \F_{2^n}$ via 
    $$ c_f((x,x_1),(y,y_1))=f(x + y) + f(x) + f(y) + f(x_1y + y_1x). $$

Clearly the function $c_f$ is symmetric. Next, we study when the function yields $0$.

\begin{lem} \label{L: c_f is 0}
    For a crooked function $f$ over $\F_{2^n}$, we have $c_f((x,x_1),(y,y_1))=0$ if and only if $(x,x_1)=(0,0)$ or $(y,y_1)=(0,0)$ or $(x,x_1)=(y,y_1)$.
\end{lem}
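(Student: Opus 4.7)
The plan is to verify both directions, with the forward (``if'') direction following by direct substitution and the reverse (``only if'') direction reducing to a case analysis on the four possible values of $(x_1,y_1)\in \F_2\times \F_2$.

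For the ``if'' direction, I would simply plug in each of the three cases. If $(x,x_1)=(0,0)$ then using $f(0)=0$ one gets $c_f=f(y)+f(y)=0$, and the case $(y,y_1)=(0,0)$ is symmetric. If $(x,x_1)=(y,y_1)$ then $x+y=0$ and $x_1y+y_1x=0$, so $c_f=f(0)+f(x)+f(x)+f(0)=0$. This step is routine.

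For the ``only if'' direction, I would assume $c_f((x,x_1),(y,y_1))=0$ together with all three of $(x,x_1)\ne(0,0)$, $(y,y_1)\ne(0,0)$, $(x,x_1)\ne(y,y_1)$, and derive a contradiction in each of the four cases. When $x_1=y_1=0$, the expression reduces to $B_f(x,y)=f(x)+f(y)+f(x+y)$; since $x,y$ are both nonzero and distinct, the three elements $x,y,0$ are pairwise distinct, and condition (2) of Definition \ref{crookedfunction} applied with $z=0$ gives $B_f(x,y)\ne 0$. When $(x_1,y_1)=(1,0)$, the term $f(x_1y+y_1x)$ becomes $f(y)$, so two $f(y)$ terms cancel and the equation collapses to $f(x+y)=f(x)$; Lemma \ref{L: f is permutation} then forces $y=0$, which combined with $y_1=0$ contradicts $(y,y_1)\ne(0,0)$. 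The case $(x_1,y_1)=(0,1)$ is handled symmetrically. When $x_1=y_1=1$ we have $x_1y+y_1x=x+y$, so $c_f$ collapses to $f(x)+f(y)$; again by Lemma \ref{L: f is permutation} this forces $x=y$, and since $x_1=y_1$ this contradicts $(x,x_1)\ne(y,y_1)$.

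There is no real obstacle here: the key observations are that $f(0)=0$ kills boundary terms, that Lemma \ref{L: f is permutation} eliminates the ``mixed parity'' cases immediately, and that the genuine content of the $x_1=y_1=0$ case is precisely condition (2) of crookedness, used in the degenerate form where one of the three arguments is $0$. The only thing to be careful about is verifying that the distinctness hypothesis of condition (2) is actually met under our standing assumptions, which is straightforward from $x\ne 0$, $y\ne 0$, and $x\ne y$ in that subcase.
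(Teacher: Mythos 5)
Your proof is correct and follows essentially the same route as the paper: direct substitution for the ``if'' direction and a case analysis on $(x_1,y_1)$ for the converse, using injectivity of $f$ in the mixed and $(1,1)$ cases. The only cosmetic difference is that in the $x_1=y_1=0$ case you invoke condition (2) of Definition \ref{crookedfunction} with $z=0$, while the paper uses the equivalent APN property (zero or two solutions, namely $x=0$ and $x=y$); both are valid.
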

\begin{proof}
    If $(x,x_1)=(0,0)$ or $(y,y_1)=(0,0)$ or $(x,x_1)=(y,y_1)$ it follows directly from the definition that  $c_f((x,x_1),(y,y_1))=0$. So let us assume that $c_f((x,x_1),(y,y_1))=0$. We have $x_1,y_1\in \F_2$, so in view of the symmetry there are three cases to consider.

    \underline{Case 1:} $x_1=y_1=0$.\\
    In this case $0=c_f((x,x_1),(y,y_1))=f(x)+f(y)+f(x+y)$ is equivalent to $f(x+y)+f(x)=f(y)$. Since $f$ is crooked, and therefore APN, it follows that for a fixed $y$ there are precisely $0$ or $2$ solutions in $x$. The solutions are $x=y$ and $x=0$.

    \underline{Case 2:} $x_1=1$, $y_1=0$.\\
    In this case $0=c_f((x,x_1),(y,y_1))=f(x)+f(x+y)$. If the elements $x$ and $x+y$ are distinct, then so are $f(x)$ and $f(x+y)$, as $f$ is a permutation (see Lemma \ref{L: f is permutation}). Therefore, we have $x=x+y$ which yields $y=0$.

    \underline{Case 3:} $x_1=y_1=1$.\\
      In this case $0=c_f((x,x_1),(y,y_1))=f(x)+f(y)$. If the elements $x$ and $y$ are distinct, then so are $f(x)$ and $f(y)$, as $f$ is a permutation. Therefore, we have $x=y$.
\end{proof}

\begin{lem} \label{L: c_f is coloring of lines}
For a crooked  function $f$ over $\F_{2^n}$ the function $c_f$ induces a coloring on the lines of $\PG(n,2)$, where the colors are the elements of $\F_{2^n}^*$ 
\end{lem}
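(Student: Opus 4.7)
The plan is to verify two assertions: first, that $c_f(P,Q) \ne 0$ whenever $P,Q$ are distinct nonzero points of $\PG(n,2)$, and second, that on any line of $\PG(n,2)$ with point set $\{P_1, P_2, P_3\}$ (so $P_3 = P_1+P_2$ in the ambient vector space $\F_{2^n} \times \F_2$), the three pairwise values $c_f(P_i,P_j)$ all coincide. Together these say precisely that $c_f$ descends to a well-defined map on the set of lines of $\PG(n,2)$ taking values in $\F_{2^n}^*$, which is exactly what it means for $c_f$ to induce such a coloring. The first assertion is immediate from Lemma \ref{L: c_f is 0}, since no pair of distinct nonzero points falls into any of the three cases that force $c_f$ to vanish.

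For the second assertion, write $P_1 = (x, x_1)$, $P_2 = (y, y_1)$, $P_3 = (x+y, x_1+y_1)$ and expand each $c_f(P_i, P_j)$ via Definition \ref{D: coloring via f}. The first three summands in $c_f((u,u_1),(v,v_1))$, namely $f(u+v) + f(u) + f(v)$, evaluate in each case to the symmetric expression $f(x) + f(y) + f(x+y)$, because for any two of the three collinear points the sum of their first coordinates equals the first coordinate of the third (for instance, for the pair $(P_1,P_3)$ the sum of first coordinates is $x + (x+y) = y$, so the term is $f(y) + f(x) + f(x+y)$).

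The only point requiring computation is the fourth summand $f(u_1 v + v_1 u)$. The crucial observation is that the bilinear form $\beta((u,u_1),(v,v_1)) := u_1 v + v_1 u$ satisfies
\[
\beta(P_1, P_3) = x_1(x+y) + (x_1+y_1)x = x_1 y + y_1 x = \beta(P_1, P_2)
\]
in characteristic $2$, and symmetrically $\beta(P_2, P_3) = y_1(x+y) + (x_1+y_1)y = y_1 x + x_1 y = \beta(P_1,P_2)$. Hence the fourth summand $f(\beta(\cdot,\cdot))$ is also the same on all three pairs, which gives $c_f(P_1,P_2) = c_f(P_1,P_3) = c_f(P_2,P_3)$ and completes the argument.

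I do not anticipate any real obstacle here: the entire statement is a short algebraic verification in characteristic $2$, and its content reduces to the invariance of the bilinear form $\beta$ under replacing one vertex of a projective line by the third collinear point, together with the symmetry of $f(u) + f(v) + f(u+v)$ in the three collinear first coordinates $x, y, x+y$. The crooked hypothesis on $f$ is used only indirectly, through Lemma \ref{L: c_f is 0}, to ensure that the colors are nonzero.
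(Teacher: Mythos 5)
Your proposal is correct and follows essentially the same route as the paper: a direct verification that the three pairwise values of $c_f$ on a line's points agree, with the key step being the invariance of $x_1y + y_1x$ under replacing one point by the third collinear point (you package this via the form $\beta$, the paper does the identical expansion inline), plus an appeal to Lemma \ref{L: c_f is 0} for nonvanishing, which the paper leaves implicit. No issues.
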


\begin{proof}
    Let $(x,x_1),(y,y_1)\in \F_{2^n}\times \F_2$ be distinct and nonzero. The line spanned by $(x,x_1)$ and $(y,y_1)$ (written as $\langle (x,x_1),(y,y_1)\rangle$) contains three points and the third point is $(x+y,x_1+y_1)$. We have to show $c_f((x,x_1),(y,y_1))=c_f((x,x_1),(x+y,x_1+y_1))=c_f((y,y_1),(x+y,x_1+y_1))$. We only show the first equality and leave the latter to the reader.
    \begin{align*}
        c_f((x,x_1),(x+y,x_1+y_1))&=f(x)+f(x+y)+f(x+(x+y))+f(x_1(x+y)+(x_1+y_1)x)\\
        &= f(x)+f(x+y)+f(y)+f(x_1x+x_1y+x_1x+y_1x)\\
        &= f(x)+f(x+y)+f(y)+f(x_1y+y_1x)\\
        &= c_f((x,x_1),(y,y_1)).
        \qedhere
    \end{align*}
\end{proof}

\begin{notation}
    Let $\HH \subset \PG(n, 2)$ be the hyperplane identified with  $\F_{2^n}\times \{ 0\}$.
\end{notation}

As can be observed in the definition of the coloring function $c_f$, this hyperplane will play a special role. We restate our second main result here for convenience.

\noindent
\textbf{Theorem \ref{T: c_f is parallelism}.}  \emph{Let $f$ be a crooked function over $\F_{2^n}$. The color classes of $c_f$ constitute pairwise disjoint spreads and the induced parallelism is denoted by $\Pi_f$.}

\begin{proof}
First, we show that the map $c_f$ assigns distinct colors to intersecting lines, so let $\ell_1$ and $\ell_2$ be two distinct, intersecting lines of $\PG(n,2)$.
In view of $\HH$, we consider four possibilities for the positions of $\ell_1$ and $\ell_2$. As $\HH$ is a hyperplane, every line of $\PG(n,2)$ either contains a single point of $\HH$ or is entirely contained in $\HH$.

\underline{Case 1:} Assume that $\ell_1,\ell_2$ are not contained in $\HH$, but $\ell_1\cap \ell_2\in \HH$.\\
In this case we can find $x,y,z\in \F_{2^n}$, such that $\ell_1=\langle (x,0),(y,1) \rangle$ and $\ell_2=\langle (x,0),(z,1) \rangle$. 
If $c_f$ assigns $\ell_1$ and $\ell_2$ the same color we have 
$$ c_f((x,0),(y,1))=f(y)+f(x+y)=f(z)+f(x+z)= c_f((x,0),(z,1)). $$
For fixed $x$ and $z$ the fact that $f$ is an APN function implies that there are $0$ or $2$ solutions in $y$. The solutions are $y=z$ and $y=x+z$ and both would imply $\ell_1=\ell_2$, so this is a contradiction.

\underline{Case 2:} Assume that $\ell_1,\ell_2$ are not contained in $\HH$, and also $\ell_1\cap \ell_2\notin \HH$.\\
In this case we can find $x,y,z\in \F_{2^n}$, such that $\ell_1=\langle (x,1),(y,0) \rangle$ and $\ell_2=\langle (x,1),(z,0) \rangle$. 
If $c_f$ assigns $\ell_1$ and $\ell_2$ the same color we have 
$$ c_f((x,1),(y,0))=f(x)+f(x+y)=f(x)+f(x+z)= c_f((x,1),(z,0)). $$
This yields $f(x+y)=f(x+z)$. 
By Lemma \ref{L: f is permutation}, we have that $f$ is a permutation, which yields $y=z$, a contradiction.

\underline{Case 3:} Assume that $\ell_1$ is contained in $\HH$, whereas $\ell_2$ is not contained in $\HH$.\\
In this case $\ell_1\cap \ell_2\in \HH$ and  we can find $x,y,z\in \F_{2^n}$, such that $\ell_1=\langle (x,0),(y,0) \rangle$ and $\ell_2=\langle (x,0),(z,1) \rangle$. 
If $c_f$ assigns $\ell_1$ and $\ell_2$ the same color we have 
$$ c_f((x,0),(y,0))=f(x)+f(y)+f(x+y)=f(z)+f(x+z)= c_f((x,0),(z,1)). $$
This yields $f(x) = f(x + y) + f(y) + f(x + z) + f(z)$ which stands in contradiction to $f$ being crooked (see part (3) of Definition \ref{crookedfunction} with $x = a$ ).

\underline{Case 4:} Assume that $\ell_1, \ell_2$ are contained in $\HH$.\\
In this case $\ell_1\cap \ell_2\in \HH$ and  we can find $x,y,z\in \F_{2^n}$, such that $\ell_1=\langle (x,0),(y,0) \rangle$ and $\ell_2=\langle (x,0),(z,0) \rangle$. 
If $c_f$ assigns $\ell_1$ and $\ell_2$ the same color we have 
$$ c_f((x,0),(y,0))=f(x)+f(y)+f(x+y)=f(x)+f(z)+f(x+z)= c_f((x,0),(z,0)). $$
This yields  $f(y)+f(x+y)=f(z)+f(x+z)$. For fixed $x$ and $z$ the fact that $f$ is an APN function implies that there are $0$ or $2$ solutions in $y$. The solutions are $y=z$ and $y=x+z$ and both would imply $\ell_1=\ell_2$, so this is a contradiction.

We have shown that $c_f$ assigns distinct colors to intersecting lines.
Finally, recall that the number of spreads in a parallelism of $\PG(n,2)$ is  $\Gauss{n}{1}$, which is $2^n-1$.
On the other hand, the number of elements in $\F_{2^n}^*$ is also $2^n-1$. In view of Lemma \ref{L: c_f is 0}, this proves the statement.
\end{proof}

\section{The equivalence problem}
\label{Section equivalence}

Our goal in this section is to prove Theorem \ref{T: main equivalence}.
First, we discuss some prerequisites regarding collineations of projective spaces.

A collineation of a projective space is an permutation of the points of said space that maps collinear points to collinear points. For a thorough discussion of collineations see \cite{Cameron}. 
The fundamental theorem of projective geometry states that every collineation of $\PG(n,q)$ can be identified with a bijective, semilinear map on $\F_q^{n+1}$ \cite{Cameron}.
The only automorphism of $\F_2$ is the identity, so all semilinear maps of $\F_2^{n+1}$ are linear.
Finally, we remark that every linear map on $\F_2^{n+1}$ can be represented by an matrix in $\F_2^{(n+1)\times (n+1)}$. 

Let $\kappa$ be a collineation of $\PG(n,2)$ and let $M\in \F_2^{(n+1)\times (n+1)}$ be the associated invertible matrix. Now, let $A\in \F_2^{n\times n}$, as well as $\alpha\in \F_2^{n}$, and $B\in \F_2^{1\times n}$, and $\beta \in \F_2$, such that we can write $M$ in the block form
$$ M= \begin{pmatrix}
    A & \alpha \\
    B & \beta\\
\end{pmatrix}. $$
Let us consider a vector $\overline{x}$ of $\F_2^{n+1}$. Again we use block notation to write $\overline{x}=(x,x_1)^\top$,  where $x_1$ is the block that contains only the last coordinate and $x$ is the block that contains the first $n$ coordinates. Using this notation it may be clear that $M\overline{x}=(Ax+\alpha x_1,Bx+\beta x_1 )$.\\
As we identify $\F_2^n$ with $\F_{2^n}$ we understand $x,\alpha$ as elements of $\F_{2^n}$. By fixing any choice of basis for $\F_{2^n}$ over $\F_2$, we understand $A$ as a linear map on $\F_{2^n}$, and $B$ as a linear functional $\F_{2^n}\rightarrow \F_2$. Using $Ax$ instead of $A(x)$ and $Bx$ instead of $B(x)$ it is therefore sensible to write 
$$\kappa((x,x_1))=(Ax+\alpha x_1,Bx+\beta x_1 )$$
for a vector $(x,x_1)\in \F_{2^n}\times \F_2$.
For the remainder of this section we use $c_f \kappa$ to denote the map 
 $c_f \kappa: (\F_{2^n}\times \F_2) \times  (\F_{2^n}\times \F_2) \rightarrow \F_{2^n}$, $((x,x_1)),(y,y_1))\mapsto c_f(\kappa((x,x_1)),\kappa((y,y_1)))$.

\noindent \textbf{Remark}:
    In view of Definition \ref{D: equivalence crooked} and Definition \ref{D: equivalence parallelisms}, we see that two crooked functions $f$ and $f'$ over $\F_{2^n}$ give rise to equivalent parallelisms via $c_f$ and $c_{f'}$ if there exists a permutation $\sigma$ of $\F_{2^n}$ with $\sigma(0)=0$ and a collineation $\kappa$ of $\PG(n,2)$ such that $\sigma c_{f'}=c_f\kappa$.

It is straight forward to prove part 1 of Theorem \ref{T: main equivalence}.

\begin{lem}
    Let $f$ and $f'$ be linearly equivalent. Then $\Pi_f$ and $\Pi_{f'}$ are equivalent.
\end{lem}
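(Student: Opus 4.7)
The plan is to construct an explicit collineation $\kappa$ of $\PG(n,2)$ and a permutation $\sigma$ of $\F_{2^n}$ with $\sigma(0)=0$ satisfying $\sigma c_{f'} = c_f \kappa$, which by the remark preceding the lemma yields the equivalence of the parallelisms $\Pi_f$ and $\Pi_{f'}$.

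Since $f$ and $f'$ are linearly equivalent, Definition \ref{D: equivalence crooked} (with $\alpha = 0$) gives $f'(x) = L_1(f(L_2(x)))$ for some linear permutations $L_1, L_2$ of $\F_{2^n}$. First, I would substitute this expression into Definition \ref{D: coloring via f}. The key observation is that $L_2$ is $\F_2$-linear and $x_1, y_1 \in \F_2$, so
$$ L_2(x_1 y + y_1 x) = x_1 L_2(y) + y_1 L_2(x). $$
Factoring $L_1$ out of the resulting four-term sum then gives
$$ c_{f'}((x,x_1),(y,y_1)) = L_1\bigl(c_f((L_2(x), x_1), (L_2(y), y_1))\bigr). $$

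Next, I would define the collineation $\kappa$ of $\PG(n, 2)$ by $\kappa((x, x_1)) := (L_2(x), x_1)$; in the block notation set up earlier in the section, this corresponds to the invertible matrix $\begin{pmatrix} L_2 & 0 \\ 0 & 1 \end{pmatrix}$, which is legitimate since $L_2$ is a linear permutation of $\F_{2^n}$. Setting $\sigma := L_1^{-1}$, which is a permutation of $\F_{2^n}$ fixing $0$, the displayed identity rearranges to $\sigma c_{f'} = c_f \kappa$, as required.

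There is no real obstacle here: the entire argument reduces to pushing $L_1$ and $L_2$ through the defining expression of $c_{f'}$. The only step meriting any attention is the identity $L_2(x_1 y + y_1 x) = x_1 L_2(y) + y_1 L_2(x)$, which is what allows the $L_2$ to be absorbed into the first coordinate of the points of $\PG(n,2)$; this identity crucially relies on the coefficients $x_1, y_1$ lying in the prime field $\F_2$, so that scalar multiplication by $x_1$ or $y_1$ commutes with the $\F_2$-linear map $L_2$.
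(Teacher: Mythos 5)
Your proposal is correct and follows essentially the same route as the paper: substitute $f' = L_1 f L_2$ into $c_{f'}$, use $\F_2$-linearity of $L_2$ to write $L_2(x_1y+y_1x) = x_1L_2(y)+y_1L_2(x)$, factor out $L_1$, and set $\sigma = L_1^{-1}$, $\kappa((x,x_1))=(L_2(x),x_1)$. No differences worth noting.
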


\begin{proof}
    If $f$ and $f'$ are linearly equivalent, there exist linear permutations $L_1,L_2$ of $\F_{2^n}$ such that $f'=L_1fL_2$.
    For two points $(x,x_1),(y,y_1)\in \F_{2^n}\times \F_2$, this implies the following
    \begin{align*}
        c_{f'}((x,x_1),(y,y_1))&= f'(x)+f'(y)+f'(x+y)+f'(x_1y+y_1x)\\
                 &= L_1(f(L_2x))+L_1(f(L_2y))+L_1(f(L_2(x+y)))+L_1(f(L_2(x_1y+y_1x)))\\
                &= L_1(f(L_2x))+L_1(f(L_2y))+L_1(f(L_2x+L_2y))+L_1(f(x_1L_2y+y_1L_2x))\\
                &= L_1 \bigl( f(L_2x)+f(L_2y)+f(L_2x+L_2y)+f(x_1L_2y+y_1L_2x) \bigr)\\
                &= L_1 \bigl( c_{f}((L_2x,x_1),(L_2y,y_1)) \bigr).
    \end{align*}
    By defining $\sigma=L_1^{-1}$ and $\kappa$ via $\kappa((x,x_1))=(L_2x,x_1)$, it follows that $\sigma c_{f'}=c_f\kappa$.
\end{proof}

Our last goal is to show part 2 of Theorem \ref{T: main equivalence}. This is more involved and the proof is split up into several lemmas. Recall that we denote by $\HH$ the hyperplane of $\PG(n,2)$ that is identified with $\F_{2^n}\times \{ 0\}$.

\begin{lem}\label{L: images}
Let $n>1$ be an odd integer.
    For $\overline{x}\in \HH$ the image of $c_f(\overline{x},.): \HH \rightarrow \F_{2^n}$ is an $(n-1)$-dimensional subspace of $\F_{2^n}$ that is unique to $\overline{x}$.
\end{lem}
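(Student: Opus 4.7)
The plan is to restrict $c_f$ to $\HH \times \HH$, identify it with the bivariate function $B_f$, and then apply Proposition \ref{P: Bending} together with the remark following it.

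First I would note that for $\overline{x}=(x,0)$ and $\overline{y}=(y,0)$ in $\HH$, the term $x_1 y + y_1 x$ in Definition \ref{D: coloring via f} vanishes, and using $f(0)=0$ we get
$$
c_f(\overline{x},\overline{y}) \;=\; f(x)+f(y)+f(x+y) \;=\; B_f(x,y) \;=\; f(x) + D_x(f(y)).
$$
Hence, as $\overline{y}$ runs over $\HH$, the image of $c_f(\overline{x},\cdot)$ equals $f(x) + H_x$, where $H_x=\{D_x(f(y)) : y\in \F_{2^n}\}$ as in Proposition \ref{P: Bending}. (Whether one interprets $\HH$ as including $\overline{0}$ or not does not matter: one has $B_f(x,0)=0=B_f(x,x)$, and $x \neq 0$ is itself in $\HH$, so the value $0$ is always attained.)

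Next I would invoke Proposition \ref{P: Bending} together with the remark following it: for any $x \in \F_{2^n}^*$, the set $H_x + f(x)$ is a linear hyperplane of $\F_{2^n}$. Writing $L_x := f(x) + H_x$, the image of $c_f(\overline{x},\cdot)$ is therefore precisely $L_x$, which is an $(n-1)$-dimensional linear subspace of $\F_{2^n}$. This proves the first assertion.

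For uniqueness, suppose $\overline{x} \neq \overline{x}'$ are two points of $\HH$, i.e., $x, x' \in \F_{2^n}^*$ are distinct. By Proposition \ref{P: Bending}, $H_x \neq H_{x'}$. Suppose for contradiction that $L_x = L_{x'}$. Since $L_x$ is the linear hyperplane whose complement is $H_x$, we have $f(x) \notin L_x$, and likewise $f(x') \notin L_{x'}=L_x$. Hence $f(x)$ and $f(x')$ both lie in the unique non-trivial coset of $L_x$ in $\F_{2^n}$, so $f(x) + L_x = f(x') + L_x$. Translating this equality by $L_x$ and using $H_x = f(x) + L_x$, $H_{x'} = f(x') + L_{x'}$ gives $H_x = H_{x'}$, a contradiction. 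Thus $L_x \neq L_{x'}$, so the subspace is uniquely determined by $\overline{x}$.

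The proof is essentially an unpacking of Proposition \ref{P: Bending} and its remark, so no serious obstacle is anticipated. The only subtle point is the coset bookkeeping showing that distinct $H_x$ force distinct $L_x$ — one has to use that both $f(x)$ and $f(x')$ fail to lie in $L_x$, and in characteristic $2$ there is only one non-trivial coset to land in.
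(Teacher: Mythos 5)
Your proof is correct and is exactly the argument the paper intends: restrict $c_f$ to $\HH$ to recover $B_f(x,y)=f(x)+D_x(f(y))$, so the image is $f(x)+H_x$, which by Proposition \ref{P: Bending} and the remark following it is the linear hyperplane complementary to $H_x$, and distinctness of the $H_x$ gives distinctness of these hyperplanes. The paper leaves all of this as "directly implied"; your write-up just fills in the details (including the harmless point about whether $0\in\HH$), so there is nothing to flag.
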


Note that $\overline{x}, \overline{y} \in \HH$ are identified with $(x,0), (y,0)$. Thus, we have that $c_f(\overline{x}, \overline{y}) = f(x + y) + f(x) + f(y)$ and the lemma above is directly implied by Proposition \ref{P: Bending} and the remark following it.

\begin{lem} \label{L: geometric configuration}
Let $f$ be a quadratic crooked function.  Let $\mathcal{C}$ be a subspace of $\PG(n,q)$ with codimension $2$, contained in $\HH$. If $n>3$ there is a spread $S$ contained inside the the parallelism $\Pi_f$ defined by $c_f$ that satisfies the following property: Not all lines of $S$ that are contained in $\HH$ are in $\mathcal{C}$, but at least one is.
\end{lem}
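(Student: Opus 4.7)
My plan is a counting argument by contradiction. Identify $\HH$ with $\F_{2^n}$ so that $\mathcal{C}$ corresponds to an $(n-1)$-dimensional $\F_2$-subspace $V\subset \F_{2^n}$; then the $\HH$-lines of color $\lambda$ are the unordered triples $\{x,y,x+y\}$ of distinct nonzero elements of $\F_{2^n}$ with $B_f(x,y)=\lambda$, and such a line lies in $\mathcal{C}$ iff $x,y\in V$. Since $f$ is quadratic, $B_f$ is $\F_2$-bilinear, which is the key structural input. The plan is to assume that for every $\lambda\in\F_{2^n}^*$ the color class $S_\lambda$ of $\Pi_f$ either has all its $\HH$-lines inside $\mathcal{C}$ or none, and derive a contradiction when $n>3$.

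First I would count. By Lemma~\ref{L: spread inside hyperplane size} each spread of $\Pi_f$ contains exactly $(2^{n-1}-1)/3$ lines of $\HH$, while $\mathcal{C}$ contains a total of $\Gauss{n-1}{2}=(2^{n-1}-1)(2^{n-2}-1)/3$ lines. Under the hypothetical dichotomy the lines of $\mathcal{C}$ partition into the complete $\HH$-restrictions of the ``inside'' spreads, giving exactly $2^{n-2}-1$ inside colors; this set is moreover equal to $B_f(V,V)\setminus\{0\}$, so $|B_f(V,V)|=2^{n-2}$.

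Next I would exploit bilinearity. For each $v\in V^*$ the linear map $B_f(v,\cdot)|_V$ has kernel $\{0,v\}$ (because $B_f(v,\cdot)$ is APN and $v\in V$), so its image $B_f(v,V)$ is an $(n-2)$-dimensional subspace of size $2^{n-2}$. Two distinct $(n-2)$-dimensional subspaces have union of size at least $3\cdot 2^{n-3}>2^{n-2}$, so the size constraint forces $B_f(v,V)=W$ for every $v\in V^*$, where $W$ is a single fixed $(n-2)$-dimensional subspace. Thus the set of inside colors is precisely $W\setminus\{0\}$.

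To finish, fix any $\lambda\in W\setminus\{0\}$ and any $x\in\F_{2^n}\setminus V$. Being an inside color forbids an $\HH$-line of color $\lambda$ through $x$, and since $x\notin V$ this forces $\lambda\notin U_x:=B_f(x,\F_{2^n})$; hence $W\cap U_x=\{0\}$. Because $U_x$ is a hyperplane (the image of $B_f(x,\cdot)$, whose kernel is $\{0,x\}$), this gives
\[
\dim(W+U_x)=(n-2)+(n-1)=2n-3,
\]
which strictly exceeds $n$ whenever $n>3$, contradicting $W+U_x\subseteq\F_{2^n}$. The main obstacle, as I see it, is isolating the right invariant: once one commits to studying the bilinear form $B_f$ on $V\times V$ and notices the rigidity imposed on the family $\{B_f(v,V):v\in V^*\}$ by $|B_f(V,V)|=2^{n-2}$, the final dimension collision essentially writes itself.
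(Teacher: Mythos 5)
Your proof is correct, and while its first half reaches the same key intermediate fact as the paper's proof, the final contradiction is obtained by a genuinely different mechanism. Both arguments assume the dichotomy (every spread meets $\mathcal{C}$ in either none or all of its $\HH$-lines), count to conclude that the set of ``inside'' colors together with $0$ is a single $(n-2)$-dimensional subspace $W=B_f(v,V)$ for every $v\in V^*$ (the paper phrases this as $\Pi_f$ inducing a parallelism on $\mathcal{C}$ whose color set $X$ is independent of the base point). From there the paper stays \emph{inside} $\mathcal{C}$: it invokes Lemma~\ref{L: images} to say that the hyperplanes $\HH_a=B_f(a,\F_{2^n})$ are pairwise distinct for $a\in\mathcal{C}$, all contain $X$, and since only $3$ hyperplanes of $\F_{2^n}$ contain a fixed codimension-$2$ subspace this bounds $|\mathcal{C}|\le 3$. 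You instead step \emph{outside} $V$: for $x\in\F_{2^n}\setminus V$ the dichotomy forces $W\cap B_f(x,\F_{2^n})=\{0\}$, and $\dim W+\dim U_x=2n-3>n$ for $n>3$ gives the contradiction. Your variant is slightly more economical in its inputs at that step --- it needs only that $B_f(x,\cdot)$ is linear with kernel $\{0,x\}$ (quadratic plus APN), not the distinctness of the hyperplanes $\HH_a$, i.e.\ it bypasses Lemma~\ref{L: images} and hence the full strength of crookedness there --- while the paper's route reuses machinery (Lemma~\ref{L: images}) that it needs elsewhere anyway. Both correctly degenerate at $n=3$, where $2n-3=n$ and, respectively, $|\mathcal{C}|=3$.
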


\begin{proof}
    We assume that there is no spread inside the parallelism $\Pi_f$ that has the desired property. This implies that if a spread $S$ has a line contained inside $\mathcal{C}$, then all the lines of $S$ contained inside $\HH$ are already contained inside $\mathcal{C}$. 
    By Lemma \ref{L: spread inside hyperplane size} there are $|S|-2^{n-1}=2^{n-3}+\ldots + 2^4+2^2+1$ lines of $S$ in $\HH$. This is precisely the size of a spread in PG$(n-2,2)$. So, in particular $\Pi_f$ induces a parallelism $\Pi_f(\mathcal{C})$ on $\mathcal{C}$.
    This implies that for a fixed $\x \in \mathcal{C}$, the image set $\{ c_f(\x, \y): \y \in \mathcal{C}\}$ is the same, call it $X$.
    As $f$ is quadratic, $c_f$ is bilinear on $\HH \times \HH$, hence the image $\{ c_f(\x, \y): \y \in \mathcal{C}\}$ is a subspace of $\F_{2^n}$. A point in $\mathcal{C}$ is incident with $\Gauss{(n-1)-1}{2-1}=\Gauss{n-2}{1}$ lines in $\mathcal{C}$, which is the number of nonzero elements in an $(n-2)$-dimensional subspace in $\F_{2^n}$. Hence $X$ is an $(n-2)$-dimensional subspace of $\F_{2^{n}}$.

    On the other hand, let $\x \in \mathcal{C}$ and define $\HH_{\x} = \{ c_f(\x, \y): \y \in \HH \}$. The discussion above implies that for any $a, b \in \mathcal{C}$ we have $\HH_a \cap \HH_b = X$. But Lemma \ref{L: images} states that the hyperplane $\HH_a$ is distinct for each $a$. Note that there are precisely $\Gauss{n-(n-2)}{(n-1)-(n-2)}=\Gauss{2}{1}=3$ hyperplanes of $\F_{2^n}$ which can pairwise intersect in the same codimension 2 subspace of $\F_{2^n}$. This would imply that the number of points in $\mathcal{C}$ is 3, which cannot be since $n > 3$.
\end{proof}

For $n=3$, Lemma \ref{L: spread inside hyperplane size} implies that a spread has only one line in a fixed hyperplane.
Therefore, the condition $n>3$ in Lemma \ref{L: geometric configuration} is necessary and this is the reason why we need $n>3$ throughout this section and in Theorem \ref{T: main equivalence}. 
By Lemma \ref{L: c_f is coloring of lines} the map $c_f$ assigns each line of $\PG(n,2)$ a color. Therefore, we will write $c_f(\ell)$ to denote the color assigned to the line $\ell$ of $\PG(n,2)$.

\begin{lem} \label{L: K(H)=H}
    Let $f$ and $f'$ be quadratic crooked functions over $\F_{2^n}$ with $n>3$ and let $\sigma$ be a permutation of $\F_{2^n}$ with $\sigma(0)=0$. Furthermore, let $\kappa$ be a collineation of $\PG(n,2)$ such that
    $ \sigma c_f=c_{f'}\kappa. $
    Then $\kappa(\HH)=\HH$.
\end{lem}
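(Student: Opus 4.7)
My plan is to argue by contradiction: suppose $\kappa(\HH) = \HH^*$ for some hyperplane $\HH^* \neq \HH$ of $\PG(n,2)$. Since $\sigma$ and $\kappa$ are bijections satisfying $\sigma c_f = c_{f'}\kappa$, two lines $\ell_1, \ell_2$ of $\PG(n,2)$ lie in the same spread of $\Pi_f$ if and only if $\kappa(\ell_1)$ and $\kappa(\ell_2)$ lie in the same spread of $\Pi_{f'}$. Hence $\kappa$ induces a bijection between the spreads of $\Pi_f$ and those of $\Pi_{f'}$, and in particular sends spreads that are mixed, in the sense of Lemma \ref{L: geometric configuration}, to mixed spreads.

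Combining this with Lemma \ref{L: geometric configuration} applied to $(\Pi_f, \HH)$ shows that for every codimension-$2$ subspace $\mathcal{C}^* \subset \HH^*$, there is a spread of $\Pi_{f'}$ that is mixed with respect to $(\mathcal{C}^*, \HH^*)$. Applying Lemma \ref{L: geometric configuration} directly to $(\Pi_{f'}, \HH)$ gives the same property for $\HH$. Thus both $\HH$ and $\HH^*$ satisfy the following property inside $\Pi_{f'}$: for every codimension-$2$ subspace of the hyperplane, there is a spread mixed with respect to it.

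The remainder of the proof reduces to showing that this property characterises $\HH$ uniquely among hyperplanes of $\PG(n,2)$, forcing $\HH^* = \HH$. I would pursue uniqueness by comparing the structure of $c_{f'}$ on $\HH \times \HH$---where by $f'$ being quadratic we have $c_{f'}((x,0),(y,0)) = B_{f'}(x,y)$ bilinear, and Lemma \ref{L: images} supplies the distinct subspace images that drive the proof of Lemma \ref{L: geometric configuration}---with its structure on $\HH^* \times \HH^*$. In the latter case, writing $\HH^* = \{(x,\lambda(x)) : x \in \F_{2^n}\}$ for some nonzero linear functional $\lambda$, the colouring picks up an extra term $f'(\lambda(x)y+\lambda(y)x)$ that is not bilinear in $(x,y)$, and a direct computation using the non-degeneracy of $B_{f'}$ shows that the analogue of Lemma \ref{L: images} fails on $\HH^*$: there exist points $\x^* \in \HH^*$ for which $\{c_{f'}(\x^*,\y^*):\y^*\in \HH^*\}$ is not an $\F_2$-subspace of $\F_{2^n}$. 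This structural failure should let one produce an explicit codimension-$2$ subspace $\mathcal{C}^* \subset \HH^*$ on which $\Pi_{f'}$ does restrict to a parallelism, contradicting the transferred property for $\HH^*$.

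The main obstacle will be constructing $\mathcal{C}^*$ explicitly and verifying the restriction property. This requires a careful case analysis depending on the interaction of $\lambda$ with $B_{f'}$ and the precise form of $c_{f'}$ on the two pieces $\HH \cap \HH^*$ and $\HH^* \setminus \HH$ of $\HH^*$, and will make essential use of the hypothesis $n > 3$ inherited from Lemma \ref{L: geometric configuration}.
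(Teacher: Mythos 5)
Your opening moves are sound: $\sigma c_f = c_{f'}\kappa$ does imply that $\kappa$ carries spreads of $\Pi_f$ to spreads of $\Pi_{f'}$, so Lemma \ref{L: geometric configuration} transfers and every codimension-$2$ subspace of $\HH^*=\kappa(\HH)$ admits a mixed spread of $\Pi_{f'}$. The genuine gap is that everything then rests on the claim that this property singles out $\HH$ among all hyperplanes, and that claim is neither proved nor plausible. Lemma \ref{L: geometric configuration} only shows that the \emph{absence} of a mixed spread for some codimension-$2$ subspace of $\HH$ leads to a contradiction; it says nothing about other hyperplanes, and there is no reason to expect that a hyperplane $\HH'\neq\HH$ possesses a codimension-$2$ subspace on which $\Pi_{f'}$ restricts to a parallelism --- if anything, mixed spreads should be at least as abundant away from $\HH$, in which case your property holds for \emph{every} hyperplane and cannot detect $\HH^*\neq\HH$. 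Your proposed mechanism also runs the logic of Lemma \ref{L: geometric configuration} backwards: in that proof, ``no mixed spread for $\mathcal{C}$'' is the hypothesis from which one \emph{derives} that the relevant image sets are equal subspaces; so establishing that the images $\{c_{f'}(\x^*,\y^*):\y^*\in\HH^*\}$ fail to be subspaces (for $\x^*=(x,1)$ this image works out to $f'(x)+f'(\lambda^{-1}(1))$, a subspace only in degenerate situations) is evidence \emph{against}, not for, the existence of the $\mathcal{C}^*$ you need. As written, the crux of the argument is the sentence ``this structural failure should let one produce \dots'', and that step is missing.

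The paper's proof avoids trying to characterize $\HH$ intrinsically and instead exploits the functional equation directly. Since $c_{f'}$ restricted to $\HH$ is the bilinear form $B_{f'}$, the map $\sigma c_f=c_{f'}\kappa$ is additive on line configurations inside $\kappa^{-1}(\HH)$. Applying Lemma \ref{L: geometric configuration} to $\mathcal{C}=\HH\cap\kappa^{-1}(\HH)$ yields a spread of $\Pi_f$ containing a line $\ell_1\subset\mathcal{C}$ and a line $\ell_2$ contained in $\kappa^{-1}(\HH)$ but not in $\mathcal{C}$; choosing a transversal $k$ and the residual lines $m_1,m_2$ of the two planes $\langle\ell_1,k\rangle,\langle\ell_2,k\rangle$, additivity of $\sigma c_f$ forces $c_f(m_1)=c_f(m_2)$, while a direct evaluation of $c_f$ (which is not additive on this configuration, precisely because $\ell_2$ leaves $\HH$) gives $c_f(m_1)+c_f(m_2)=f(x)+f(z)+f(x+z)\neq 0$. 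If you want to rescue your plan, the invariant to use is not ``existence of mixed spreads'' but the additivity of the colouring along such configurations --- and pursuing that leads you to the paper's argument.
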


\begin{proof}
    Let $\Pi_f$ and $\Pi_{f'}$ be the parallelisms defined by $c_f$ and $c_{f'}$ respectively.
    Recall that given a quadratic function $f$ over $\F_{2^n}$, we have a corresponding symmetric bilinear function $B_f(x, y) = f(x + y) + f(x) + f(y)$. Observe that given a coloring function $c_f$, we obtain that $c_f((x, 0), (y, 0)) = B_f(x, y) = f(x + y) + f(x) + f(y)$. Thus, restricting $c_f$ to $\HH$, we recover $B_f$.

    Suppose for a contradiction that $f$ and $f'$ are quadratic APN permutations for which there exists a permutation $\sigma$ of $\F_{2^n}$ and collineation $\kappa$ of $\PG(n, 2)$ such that  $\sigma c_f=c_{f'}\kappa$ and $\kappa(\HH) \neq \HH$. This yields $\kappa^{-1}(\HH) \neq \HH$. Clearly $c_{f'}=c_{f'} \kappa \kappa^{-1}$ and so as $c_{f'}$ is bilinear on $\HH$, we have that $c_{f'}\kappa$ is bilinear on $\kappa^{-1}(\HH)$. Therefore, $\sigma c_f$ is also bilinear on $\kappa^{-1}(\HH)$. 
    
    Set $\mathcal{C} = \HH \cap \kappa^{-1}(\HH)$ and observe that $\mathcal{C}$ has codimension $2$ as $\HH \neq\kappa^{-1}(\HH)$. By Lemma \ref{L: geometric configuration}, there is a spread $S$ in $\Pi_f$ that has a line $\ell_1$ in $\mathcal{C}$ but also a line $\ell_2$ in $\kappa^{-1}(\HH)$ that is not contained in $\mathcal{C}$. Now take any line $k$ not contained in $\HH$ which intersects $\ell_1$ and $\ell_2$. Denote $\ell_1 \cap k = P_1$ and $\ell_2 \cap k = P_2$.
    Clearly such a line $k$ belongs to a spread distinct from $S$. Consider now the planes $\pi_1 = \langle \ell_1, k\rangle$ and $\pi_2 = \langle \ell_2, k\rangle$. Let $m_1$ be the line in $\pi_1$ distinct from $\ell_1$ and $k$, but which contains $P_1$. Likewise let $m_2$ be the line in $\pi_2$ which is distinct from $\ell_2$ and $k$, but contains $P_2$. We note that all of these points, lines, and planes are contained in $\kappa^{-1}(\HH)$. 

    We claim that $m_1$ and $m_2$ belong to the same spread of the parallelism defined by $c_{f'}\kappa = \sigma c_f$ but to distinct spreads of $\Pi_f$, which is a contradiction. Let $x,y,z,w\in \F_{2^n}$, such that $\ell_1 = \langle (x, 0), (y, 0)\rangle$, $\ell_2 = \langle (z, 0), (w, 1) \rangle$ and $k = \langle (w, 1), (x, 0) \rangle$. Thus, $m_1 = \langle (x, 0), (w+y, 1) \rangle$ and $m_2 = \langle (w, 1), (z+x,  0) \rangle$. Since $\sigma c_f$ is bilinear on $\kappa^{-1}(\HH)$, we have
    \begin{align*}
        \sigma c_f (m_1) &= \sigma c_f((x, 0), (w+y, 1))  \\
        &= \sigma c_f((x, 0), (w, 1)) + \sigma c_f((x, 0), (y, 0)) \\
        & =  \sigma c_f(k)+ \sigma c_f (\ell_1) .
    \end{align*}
    Likewise, we have
    \begin{align*}
        \sigma c_f (m_2) &= \sigma c_f((w, 1), (z+x, 0))  \\
        &= \sigma c_f((w, 1), (x, 0)) + \sigma c_f((w, 1), (z, 0)) \\
        & = \sigma c_f(k)+\sigma c_f (\ell_2) .
    \end{align*}
    We assumed that $\ell_1$ and $\ell_2$ come from the same spread of $\Pi_f$, this yields $\sigma c_f(\ell_1) = \sigma c_f(\ell_2)$ which implies $\sigma c_f(m_1) = \sigma c_f(m_2)$. Hence $c_f(m_1)=c_f(m_2)$, which yields that $m_1$ and $m_2$ are in the same spread in the parallelism $\Pi_f$.

    Now, let us run these computations again on $c_f$, demonstrating that $m_1$ and $m_2$ cannot belong to the same spread of $\Pi_f$:
    \begin{align*}
        c_f (m_1) &= c_f((x, 0), (w+y, 1))  \\
        &= B_f(x, w+y) + f(x)   \\
        &= B_f(x, w) + f(x) + B_f(x, y) \\
        &= c_f((x, 0), (y, 0)) + c_f((x, 0), (w, 1))\\
        & = c_f(k)+c_f (\ell_1).
    \end{align*}
    On the other hand
    \begin{align*}
        c_f (m_2) &= c_f((w, 1), (z+x, 0))  \\
        &= B_f(w, x+z) + f(z+x)   \\
        &= B_f(w, x) + B_f(w, z)+ f(x + z)  \\
        &= f(w)+f(x)+f(w+x)  + f(w)+f(z)+f(w+z)+ f(x + z) \\
        & = c_f((x, 0), (w, 1)) +f(x)+ c_f((z, 0), (w, 1)) + f(z)+ f(x + z)\\
        & =  c_f(k)+c_f (\ell_2) + f(x) + f(z) + f(x + z).
    \end{align*}
    If $c_f(m_1) = c_f(m_2)$, then it must be that $f(x) + f(z) + f(x + z) = 0$ and so $x = 0$, $z = 0$ or $x = z$. All of which are contradictions, since $x,z\in\F_{2^n}^*$ and $\ell_2\neq k$. Thus it must be that $\kappa(\HH) = \HH$. 
\end{proof}

At the beginning of this section we stated that we can understand a collineation $\kappa$ via $\kappa((x,x_1))=(A(x)+\alpha x_1,B(x)+\beta x_1 )$, where $A$ and $B$ are linear. Now let $f$, $f'$, $\sigma$ and $\kappa$ be as in Lemma \ref{L: K(H)=H}. 
As $\kappa(\HH)=\HH$, there exists a linear function $A$ on $\F_{2^n}$ such that $\kappa((x,0))=(Ax,0)$ for all $x\in \F_{2^n}^*$. We want to deduce from this that if $\sigma c_f=c_{f'}\kappa$, then $\sigma$ is linear. Before we manage to do so we need the following technical lemma.

\begin{lem} \label{L: intersecting lines in H}
Let $f$ be a quadratic crooked function and let $\Pi_f$ be the associated parallelism.
For any two spreads $S_1,S_2\in \Pi_f$ there are intersecting lines $\ell_1\in S_1$ and $\ell_2\in S_2$ with $\ell_1,\ell_2\subset \HH$.
\end{lem}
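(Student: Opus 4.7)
The plan is to reduce the statement to a linear-algebraic question about the bilinear form $B_f$ and finish with a counting argument over the hyperplanes of $\F_{2^n}$. A line contained in $\HH$ has the form $\{(x,0),(y,0),(x+y,0)\}$ for distinct $x,y\in\F_{2^n}^*$, and its color is $c_f((x,0),(y,0))=B_f(x,y)$. Since $f$ is quadratic, $B_f(x,\cdot)$ is $\F_2$-linear, with image $V_x:=\{B_f(x,y):y\in\F_{2^n}\}$. If $S_1,S_2$ are assigned distinct colors $c_1,c_2\in\F_{2^n}^*$, then finding intersecting lines of these colors inside $\HH$ amounts to finding an $x\in\F_{2^n}^*$ with $\{c_1,c_2\}\subset V_x$: such an $x$ automatically produces lines $\ell_1=\langle(x,0),(y,0)\rangle$ and $\ell_2=\langle(x,0),(z,0)\rangle$ that meet at the point $(x,0)$.

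The key observation is that Lemma \ref{L: images} together with a count forces the map $x\mapsto V_x$ to be a bijection from $\F_{2^n}^*$ onto the set of all hyperplanes of $\F_{2^n}$: each $V_x$ is $(n-1)$-dimensional and the $V_x$ are pairwise distinct, and there are exactly $\Gauss{n}{1}=2^n-1$ hyperplanes in $\F_{2^n}$. Since $c_1\neq c_2$ are both nonzero, $W:=\langle c_1,c_2\rangle$ is a 2-dimensional subspace of $\F_{2^n}$, and the number of hyperplanes of $\F_{2^n}$ containing $W$ equals $\Gauss{n-2}{1}=2^{n-2}-1$, which is at least $1$ for $n\geq 3$. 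Hence some $V_x$ contains $W$, which supplies the desired $x$.

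To close the argument, I would verify that for this $x$, the preimages $y,z\in\F_{2^n}$ of $c_1$ and $c_2$ under $B_f(x,\cdot)$ furnish two genuine distinct lines in $\HH$. By Lemma \ref{L: well defined on H} the kernel of $B_f(x,\cdot)$ has at most two elements, and since $B_f(x,0)=B_f(x,x)=0$ it must equal $\{0,x\}$. Because $c_1$, $c_2$ and $c_1+c_2$ are all nonzero, the elements $y$, $z$ and $y+z$ all lie outside $\{0,x\}$, which guarantees that $\ell_1$ and $\ell_2$ are distinct projective lines fully contained in $\HH$ and meeting at $(x,0)$. The heart of the proof is the bijection observation in the second paragraph; once that is in hand, identifying the correct $x$ is a matter of counting hyperplanes through $W$, and the verification that $y$ and $z$ actually produce legitimate distinct lines is routine.
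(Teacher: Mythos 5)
Your proposal is correct and follows essentially the same route as the paper: both use Lemma \ref{L: images} plus the count $\Gauss{n}{1}=2^n-1$ to see that $x\mapsto V_x$ is a bijection onto the hyperplanes of $\F_{2^n}$, and then place the two colors in a common hyperplane $V_x$. Your closing verification that the preimages $y,z$ yield genuine distinct lines is a welcome (and correct) addition that the paper leaves implicit.
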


\begin{proof}
In Lemma \ref{L: images} we have already shown that for $\overline{x}\in \HH$ the image $c_f(\overline{x}, .)(\HH)$ is an $(n-1)$-dimensional subspace $\HH_x$ of $\F_{2^n}$ that is unique to $x$.

    There are precisely $\Gauss{n}{1}$  points in $\HH$, and so there are $\Gauss{n}{1}$ many distinct $(n-1)$-dimensional subspaces $\HH_x$ of $\F_{2^n}$ to consider. 
    However, the number of $(n-1)$-dimensional subspaces of $\F_{2^n}$ is exactly $\Gauss{n}{1}$. So, we can interpret every $(n-1)$-dimensional subspaces of $\F_{2^n}$ as the image of $c_f(\overline{x}, .)$ for some $\overline{x}\in \HH$.
    In particular if one picks two elements of $\F_{2^n}^*$, they are contained in  some $(n-1)$-dimensional space $T$ of $\F_{2^n}$ and we can find a point $\overline{x}\in \HH$ such that the image of $c_f(\overline{x},.)$ is $T$.
\end{proof}

\begin{lem}\label{L: sigma linear}
 Let $f$ and $f'$ be quadratic crooked functions over $\F_{2^n}$ with $n>3$
       and let $\sigma$ be a permutation of $\F_{2^n}^*$ with $\sigma(0)=0$. Furthermore, let $\kappa$ be a collineation of $\PG(n,2)$ such that
    $ \sigma c_f=c_{f'}\kappa$. Then $\sigma$ is linear.
\end{lem}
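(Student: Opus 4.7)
The plan is to pass from the full coloring equation to its restriction on the hyperplane $\HH$, where $c_f$ and $c_{f'}$ become the symmetric bilinear forms $B_f$ and $B_{f'}$, and then exploit the surjectivity content of Lemma~\ref{L: intersecting lines in H} to push additivity of $\sigma$ through $B_f$. Since we are working over $\F_2$, additivity is the same as linearity, so that is all we need.

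First I would invoke Lemma~\ref{L: K(H)=H} to get $\kappa(\HH)=\HH$; combined with the description of collineations of $\PG(n,2)$ given just before Lemma~\ref{L: intersecting lines in H}, this yields an invertible $\F_2$-linear map $A$ of $\F_{2^n}$ such that $\kappa((x,0))=(Ax,0)$ for all $x\in\F_{2^n}$. Restricting the hypothesis $\sigma c_f=c_{f'}\kappa$ to pairs in $\HH\times\HH$ then gives
\begin{equation*}
    \sigma(B_f(x,y))=c_{f'}((Ax,0),(Ay,0))=B_{f'}(Ax,Ay)\qquad\text{for all }x,y\in\F_{2^n}.
\end{equation*}
Since $f$ and $f'$ are quadratic, both $B_f$ and the right-hand side $\tilde{B}(x,y):=B_{f'}(Ax,Ay)$ are $\F_2$-bilinear on $\F_{2^n}\times\F_{2^n}$.

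Next I would verify additivity of $\sigma$. Fix $u,v\in\F_{2^n}^*$ with $u\neq v$ (the cases $u=0$, $v=0$, $u=v$ all reduce trivially to $\sigma(0)=0$). The key input is the observation made inside the proof of Lemma~\ref{L: intersecting lines in H}: every $(n-1)$-dimensional subspace of $\F_{2^n}$ arises as the image of $B_f(x,\cdot)$ for some $x\in\F_{2^n}^*$. Pick any $(n-1)$-dimensional subspace $T\subseteq\F_{2^n}$ containing both $u$ and $v$, and let $x\in\F_{2^n}^*$ be such that $B_f(x,\F_{2^n})=T$. Choose $y_1,y_2\in\F_{2^n}$ with $B_f(x,y_1)=u$ and $B_f(x,y_2)=v$. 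By bilinearity of $B_f$,
\begin{equation*}
    B_f(x,y_1+y_2)=B_f(x,y_1)+B_f(x,y_2)=u+v.
\end{equation*}
Applying $\sigma$ and using the boxed identity above together with bilinearity of $\tilde{B}$,
\begin{equation*}
    \sigma(u+v)=\tilde{B}(x,y_1+y_2)=\tilde{B}(x,y_1)+\tilde{B}(x,y_2)=\sigma(u)+\sigma(v).
\end{equation*}
Thus $\sigma$ is additive on $\F_{2^n}$, and because the ground field is $\F_2$, additivity coincides with $\F_2$-linearity, so $\sigma$ is linear as required.

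The conceptual heart of the argument is the reduction to $\HH\times\HH$: there quadraticity of $f,f'$ turns $c_f$ and $c_{f'}$ into honest bilinear forms, and Lemma~\ref{L: K(H)=H} guarantees that $\kappa$ respects this reduction. I expect no serious obstacle here; the only point that requires a little care is the surjectivity claim used to realize arbitrary pairs $u,v\in\F_{2^n}^*$ as values of $B_f(x,\cdot)$ for a common $x$, but this is exactly what Lemma~\ref{L: intersecting lines in H} supplies via Lemma~\ref{L: images}.
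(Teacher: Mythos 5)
Your argument is correct and is essentially the paper's own proof: both restrict $\sigma c_f=c_{f'}\kappa$ to $\HH\times\HH$ (after Lemma~\ref{L: K(H)=H} gives $\kappa((x,0))=(Ax,0)$), use the surjectivity content of Lemma~\ref{L: intersecting lines in H} to realize $u$ and $v$ as values of $B_f$ with a common argument, and push additivity through the bilinearity of $B_f$ and $B_{f'}$. The only cosmetic difference is that you fix the first slot ($B_f(x,y_1)=u$, $B_f(x,y_2)=v$) while the paper fixes the second ($B_f(x,y)=u$, $B_f(z,y)=v$), which is immaterial since $B_f$ is symmetric.
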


\begin{proof}
Let $u,v\in \F_{2^n}^*$. In view of Lemma \ref{L: intersecting lines in H} there exist  $x,y,z\in \F_{2^n}^*$ such that $B_f(x,y)=c_f( (x,0),(y,0) )=u$ and $B_f(z,y)=c_f((z,0),(y,0))=v$.
Furthermore, we have seen that there is a linear operator $A$ such that $\kappa((x,0))=(Ax,0)$.
We have
    \begin{align*}
\sigma(u+v)&=\sigma(B_f(x,y)+B_f(z,y))\\
&=\sigma(B_f(x+z,y))\\
&=\sigma(c_f((x+z,0),(y,0) ))\\
&=c_{f'}((A(x+z),0),(Ay,0))\\
&=B_{f'}(A(x + z),Ay)\\
&=B_{f'}(Ax,Ay)+B_{f'}(Az,Ay)\\
&=c_{f'}((Ax,0),(Ay,0))+c_{f'}((Az,0),(Ay,0))\\
&=\sigma c_{f}((x,0),(y,0))+\sigma c_{f}((z,0),(y,0))\\
&=\sigma(u)+\sigma(v). 
\qedhere
\end{align*}
\end{proof}

We are now in a position to complete the proof of part 2 of Theorem \ref{T: main equivalence}.

\begin{prop}\label{P: }
    Let $f$ and $f'$ be quadratic crooked functions over $\F_{2^n}$ with $n > 3$. It holds that $\Pi_f$ and $\Pi_f'$ are equivalent if and only if there exist linear permutations $\sigma$ and $A$ of $\F_{2^n}$ and $\alpha\in\F_{2^n}$ such that $f'(x)=\sigma f(Ax + \alpha)+ \sigma(f(\alpha))$. 
\end{prop}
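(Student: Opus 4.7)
The plan is to prove both directions by converting between the functional equation $\sigma c_f = c_{f'}\kappa$ used in Lemmas \ref{L: K(H)=H} and \ref{L: sigma linear} and the explicit affine-equivalence formula for $f'$.

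For the forward direction, assume $\Pi_f$ and $\Pi_{f'}$ are equivalent, so by the remark preceding Lemma \ref{L: images} there exist a permutation $\sigma$ of $\F_{2^n}$ with $\sigma(0)=0$ and a collineation $\kappa$ of $\PG(n,2)$ satisfying (after possibly swapping roles) $\sigma c_f = c_{f'}\kappa$. Lemma \ref{L: K(H)=H} gives $\kappa(\HH)=\HH$, and combined with the bijectivity of $\kappa$ this forces the block form $\kappa((x,x_1)) = (Ax + \alpha x_1,\,x_1)$ for an invertible $\F_2$-linear map $A$ on $\F_{2^n}$ and some $\alpha \in \F_{2^n}$; Lemma \ref{L: sigma linear} tells us that $\sigma$ is linear. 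Expanding $c_{f'}\kappa((x,x_1),(y,y_1))$, using linearity of $A$ and the identity $x_1 y_1 \alpha + y_1 x_1 \alpha = 0$ in characteristic $2$, rewrites the equation $\sigma c_f = c_{f'}\kappa$ as
$$\sigma[f(x+y)+f(x)+f(y)+f(x_1 y+y_1 x)] = f'(A(x+y)+\alpha(x_1+y_1))+f'(Ax+\alpha x_1)+f'(Ay+\alpha y_1)+f'(A(x_1 y+y_1 x)).$$
Specializing $x_1=1$, $y_1=0$, $x=0$ kills most of the terms and leaves $\sigma f(y) = f'(Ay+\alpha)+f'(\alpha)$. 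Substituting $y \mapsto A^{-1}(y+\alpha)$, and noting that setting $y=0$ in the previous relation gives $f'(\alpha) = \sigma f(A^{-1}\alpha)$, we obtain $f'(y) = \sigma f(A^{-1}y + A^{-1}\alpha) + \sigma f(A^{-1}\alpha)$, which is the required affine-equivalence formula with $A^{-1}$ playing the role of the proposition's $A$ and $A^{-1}\alpha$ the role of $\alpha$.

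For the converse, given $f'(x) = \sigma f(Ax+\alpha) + \sigma f(\alpha)$ with $\sigma,A$ linear permutations of $\F_{2^n}$, define $\kappa((u,u_1)) = (A^{-1}u + A^{-1}\alpha\, u_1,\,u_1)$ and verify $\sigma c_f = c_{f'}\kappa$ directly; equivalence of the parallelisms then follows from the remark in Section \ref{Section equivalence}. Writing $\beta = A^{-1}\alpha$ and substituting the formula for $f'$, the identity to check splits into the four cases $(u_1,v_1)\in\F_2^2$, each of which reduces to an instance of
$$f(u+v+\beta)+f(u+\beta)+f(v+\beta)+f(\beta) = f(u+v)+f(u)+f(v)$$
(or an obvious simplification thereof). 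This follows immediately from the quadratic expansion $f(x+y) = f(x)+f(y)+B_f(x,y)$ together with the bilinearity of $B_f$, since all $\beta$-dependent cross-terms cancel in characteristic $2$.

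The main obstacle is the forward direction's bookkeeping: one has to pick specializations of $(x,x_1,y,y_1)$ clever enough to collapse the four $f'$-terms to a single pointwise relation between $f$ and $f'$. The linearity of $\sigma$ from Lemma \ref{L: sigma linear} is essential to pull $\sigma$ inside the sum and isolate $\sigma f(y)$; the invertibility of $A$ (which comes from $\kappa(\HH)=\HH$ combined with bijectivity of $\kappa$) is what makes it possible to substitute $y \mapsto A^{-1}(y+\alpha)$ and solve explicitly for $f'$. The backward direction, by contrast, is a direct computation whose success is built into the quadratic structure of $f$.
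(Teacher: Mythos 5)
Your proposal is correct and follows essentially the same route as the paper: both directions use Lemmas \ref{L: K(H)=H} and \ref{L: sigma linear} to reduce to $\sigma c_f = c_{f'}\kappa$ with $\kappa$ fixing $\HH$, extract the affine relation by evaluating at a single well-chosen line (you use $\langle(0,1),(y,0)\rangle$ where the paper uses $\langle(x,1),(0,1)\rangle$), and verify the converse by a case analysis that reduces to the bilinearity of $B_f$. The only nit: ``setting $y=0$ in the previous relation'' should refer to the post-substitution identity $f'(y)=\sigma f(A^{-1}y+A^{-1}\alpha)+f'(\alpha)$ together with $f'(0)=0$ (the pre-substitution relation at $y=0$ gives only $0=0$), but the conclusion $f'(\alpha)=\sigma f(A^{-1}\alpha)$ is correct.
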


\begin{proof}
    Suppose that $\Pi_f$ and $\Pi_{f'}$ are equivalent. Lemmas \ref{L: K(H)=H} and \ref{L: sigma linear} imply that there exist a linear permutation $\sigma$ and a collineation $\kappa$ of $\PG(n, 2)$ which fixes $\mathcal{H}$ such that $\sigma c_{f'} = c_{f}\kappa$.
    Let $A\in \F_2^{n\times n}$, as well as $\alpha\in \F_2^{n}$, and $B\in \F_2^{1\times n}$, and $\beta \in \F_2$, such that we can understand $\kappa$ via the $(n+1)\times (n+1)$ matrix 
    $$\begin{pmatrix}
    A & \alpha \\
    B & \beta\\
\end{pmatrix}. $$
Recall also that $\kappa((x,x_1))$ is understood via 
$\kappa((x,x_1))=(Ax+\alpha x_1,Bx+\beta x_1 )$. As $\kappa(\HH)=\HH$ we have $\kappa((x,x_1))=(Ax+\alpha x_1,x_1 )$.\\
    Observe that $\sigma c_{f'}((x,1),(0,1))=\sigma f'(x)$ and $c_{f}\kappa((x, 1), (0, 1)) = f(Ax+\alpha)+f(\alpha)$. Thus,
    $$
    f'(x) = \sigma^{-1}f(Ax + \alpha) + \sigma^{-1}f(\alpha).
    $$

    Now suppose there exist $\sigma, A$ linear permutations of $\F_{2^n}$ and an element $\alpha \in \F_{2^n}$ such that $f'(x) = \sigma(f(Ax+ \alpha)) + \sigma(f(\alpha))$. Let $\kappa(x, x_1) = (Ax + \alpha x_1, x_1)$ be understood as a collineation of $\PG(n, 2)$. We will show that it follows that $\sigma^{-1} c_{f'}((x,x_1),(y,y_1)) = c_f\kappa((x,x_1),(y,y_1))$, thus implying that the parallelisms $\Pi_{f'}$ and $\Pi_f$ are equivalent. To do this, we consider two cases, either $(x_1, y_1)\in\{(0,1),(1,0),(1,1) \}$ or $(x_1, y_1) = (0, 0)$.

    Let $(x_1, y_1)\in\{(0,1),(1,0),(1,1) \}$. Since it must be the case that exactly two of the terms $x_1, y_1, x_1 + y_1$ are equal to 1, we demonstrate that for $(x_1, y_1) = (0, 1)$, we have $\sigma c{f'}((x, 0), (y, 1)) = c_f (\kappa (x, 0), \kappa(y, 1))$. The computations for the other two cases are identical.
    \begin{align*}
        \sigma^{-1} c_{f'}((x,0),(y,1))&=\sigma^{-1} f'(y)+\sigma^{-1} f'(x+y)\\
        &= f(Ay+\alpha)+f(\alpha)+f(A(x+y)+\alpha)+f(\alpha)\\
        &=f(Ay+\alpha)+f(Ax+Ay+\alpha)\\
        &=c_f((Ax,0),(Ay+\alpha,1)) \\
        &= c_f(\kappa(x, 0), \kappa(y, 1)).
    \end{align*}

    Now we consider the case $(x_1, y_1) = (0, 0)$. First, we observe that
    \begin{align*}
        \sigma^{-1}f'(x)&=f(Ax + \alpha) + f(\alpha) \\
        &= f(Ax + \alpha) + f(\alpha) + f(Ax) + f(Ax) \\
        &= f(Ax) + c_f((Ax, 0), (\alpha, 0)).
    \end{align*}
    Furthermore recall that for a quadratic function $f$ we have that $c_f((Ax,0),(\alpha,0))$  is the bilinear function $B_f(Ax,\alpha)$. Thus
     \begin{align*}
        \sigma^{-1}c_{f'}((x,0),(y,0))&=\sigma^{-1}f(x)+\sigma^{-1}f(y)+\sigma^{-1}f(x+y)\\
        &= f(Ax)+B_f(Ax,\alpha)+f(Ay)+B_f(Ay,\alpha)+f(A(x+y))+B_f(A(x+y),\alpha)\\
        &=c_f((Ax,0),(Ay,0))+B_f(Ax,\alpha)+B_f(Ay,\alpha)+B_f(Ax+Ay, \alpha).\\
        &=c_f((Ax,0),(Ay,0))+B_f(Ax+Ay,\alpha)+B_f(Ax+Ay, \alpha).\\
        &=c_f((Ax,0),(Ay,0)) \\
        &=c_f(\kappa(x, 0), \kappa(y, 0)).
        \qedhere
    \end{align*}
\end{proof}

\section{Concluding Remarks}

In \cite{godsil_crooked}, it was shown that a crooked function can be characterized by the Preparata-like code it defines as well as by the distance-regular graph it produces \cite{Bending1998, Distance}. We remark that this characterization does not seem to immediately extend to our definition of parallelisms via the coloring function $c_f$. In particular, going through the proof of Theorem \ref{T: c_f is parallelism}, we see that case 4 follows from a specialization of the definition of crookedness, and does not require the full strength of the definition of crookedness. 

In particular, it follows from our arguments in Section \ref{Section construction} that $f$ (over $\F_{2^n}$) defines a parallelism of $\PG(n, 2)$ via the coloring function $c_f$ if and only if $f$ is an APN permutation which satisfies $f(x) \neq f(x + y) + f(y) + f(x + z) + f(z)$ for all $x, y, z \in \F_{2^n}$, and $x \neq 0$. 

Let $H_x = \{D_x(y): y \in \F_{2^n}\}$ be the set of all derivatives in the direction of $x$. The following statements can be shown to be equivalent:

\begin{enumerate}
    \item  $c_f((x, x_1), (y, y_1)) = f(x + y) + f(x) + f(y) + f(x_1y + y_1x)$ defines a parallelism.
    \item  $f$ is an APN permutation which satisfies $f(x) \not \in H_x + H_x$ for all $x \neq 0$. 
    \item Let $F_{x, a} = \{ a , f(x) + a\}$ (a coset of $\{0, f(x) \}$ of the additive group of $\F_{2^n}$). Then $|F_{x, a} \cap H_x| = 1$ for all $x, a \in \F_{2^n}$.
\end{enumerate}

Since any quadratic APN permutation is crooked, we performed a search over known non-quadratic APN permutations. We did not find any functions which satisfy these more relaxed conditions on $f$. It is possible that the relaxed conditions in fact imply crookedness, but this does not seem to follow immediately.

 Finally, we note that that it may be possible to construct multiple inequivalent line-parallelisms starting with one quadratic function $f$.
 If $f'(x) = f(x) + L(x)$ where $L$ is a linear function such that $f'$ is EA-equivalent to $f$ but not affine equivalent, then Theorem \ref{T: main equivalence} implies that the corresponding parallelism $\Pi_{f'}$ is inequivalent to $\Pi_f$. 
 For crooked functions however, this seems to be a nontrivial matter.  A more detailed discussion about this can be found in \cite[Section 3]{crooked_2000}, where such a function $L$ is called a  \textit{linear accomplice} of $f$. 

\section*{Acknowledgments}

This paper was in part conceptualized during the 5th Pythagorean conference in Kalamata, the authors would like to thank the organizers for a wonderful conference.
The first author gratefully acknowledges the financial support provided by Stefan Witzel through the grant WI4079/6, which made it possible to attend the 5th Pythagorean Conference.
The authors would like to thank Sam Mattheus for bringing to their attention the paper \cite{Distance} which was the catalyst for this project.
The authors would like to thank Charlene Weiß for suggesting a nice design-theoretic proof of Lemma \ref{L: images} that is not in the current iteration of the paper.
We would also like to thank Alex Pott for many helpful discussions and for bringing the references \cite{Pott_APN_planarity, Pott2} to our attention.

\bibliographystyle{plain}
\bibliography{bibliography.bib}

\end{document}